\begin{document}


\newtheorem{theorem}{Theorem}
\newtheorem{conjecture}{Conjecture}
\newtheorem{lemma}[theorem]{Lemma}
\newtheorem{proposition}[theorem]{Proposition}
\newtheorem{observation}[theorem]{Observation}
\crefformat{observation}{Observation~#2#1#3}

\newtheorem{corollary}[theorem]{Corollary}
\newtheorem{fact}[theorem]{Fact}

\theoremstyle{definition}
\newtheorem{definition}[theorem]{Definition}
\newtheorem{example}[theorem]{Example}
\newtheorem{remark}[theorem]{Remark}
\newtheorem{convention}[theorem]{Convention}

\renewcommand{\phi}{\varphi}
\renewcommand{\emptyset}{\varnothing}
\renewcommand{\epsilon}{\varepsilon}
\newcommand{\defname}[1]{\textbf{{#1}}}
\newcommand{\df}{:=}
\newcommand{\bnf}{::=}
\newcommand{\Pow}{\mathcal P}
\newcommand{\pow}[1]{\Pow(#1)}

\newcommand{\IH}{\mathit{IH}}

\newcommand{\Nat}{\mathbb{N}}
\newcommand{\Int}{\mathbb{Z}}
\newcommand{\Cal}[1]{\mathcal{#1}}
\newcommand{\SF}[1]{\mathsf{#1}}
\newcommand{\BB}[1]{\mathbb{#1}}
\newcommand{\resp}{\textit{resp.}\xspace}
\newcommand{\aka}{\textit{aka}\xspace}
\newcommand{\viz}{\textit{viz.}\xspace}
\newcommand{\ie}{\textit{i.e.}\xspace}
\newcommand{\cf}{\textit{cf.}\xspace}
\newcommand{\wrt}{\textit{wrt}\xspace}
\newcommand{\rk}[1]{\SF{rk}(#1)}

\newcommand{\red}[1]{{\color{red} #1}}
\newcommand{\blue}[1]{{\color{blue} #1}}
\newcommand{\mgt}[1]{\textcolor{magenta}{#1}}
\newcommand{\cyn}[1]{\textcolor{cyan}{#1}}
\newcommand{\olv}[1]{\textcolor{olive}{#1}}
\newcommand{\orange}[1]{{\color{orange}#1}}
\newcommand{\purple}[1]{{\color{purple}#1}}

\newcommand{\anupam}[1]{\todo[inline]{Anu: #1}}
\newcommand{\abhishek}[1]{\todo[inline]{Abhi: #1}}

\newcommand{\Alphabet}{\mathcal{A}}
\newcommand{\Var}{\mathcal{V}}

\newcommand{\proves}{\vdash}

\newcommand{\Lang}{\mathcal L}
\newcommand{\lang}[1]{\Lang(#1)}

\newcommand{\wLang}{\Lang}
\newcommand{\wlang}[1]{\wLang(#1)}

\newcommand{\FV}{\mathrm{FV}}
\newcommand{\fv}[1]{\FV(#1)}

\newcommand{\fint}[1]{\lceil #1 \rceil}
\newcommand{\flred}{\rightarrow_{\FL}}
\newcommand{\flredeq}{\flred^{=}}
\newcommand{\fl}[1]{\FL(#1)}
\newcommand{\eqfl}{=_\FL}
\newcommand{\lefl}{<_\FL}
\newcommand{\leqfl}{\leq_\FL}
\newcommand{\geqfl}{\geq_\FL}

\newcommand{\subform}{\sqsubseteq}
\newcommand{\supform}{\sqsupseteq}
\newcommand{\dle}{\prec}
\newcommand{\dleq}{\preceq}
\newcommand{\dge}{\succ}
\newcommand{\dgeq}{\succeq}
\newcommand{\id}{\SF{id}}
\newcommand{\FL}{\mathrm{FL}}


\newcommand{\KA}{\mathsf{KA}}
\newcommand{\lhKA}{\ell\KA}

\newcommand{\RLA}{\ensuremath{\mathsf{RLA}}\xspace}
\newcommand{\RLL}{\ensuremath{\mathsf{RLL}}\xspace}
\newcommand{\fd}[1]{#1^{\mathit{fd}}}
\newcommand{\RLLfd}{\fd\RLL}

\newcommand{\RLLLang}{\RLL_\Lang}

\newcommand{\ubdd}{\mathsf u}
\newcommand{\gdd}{\mathsf g}
\newcommand{\gRLL}{\gdd\RLL}
\newcommand{\gRLLfd}{\fd\gRLL}
\newcommand{\uRLL}{\ubdd\RLL}
\newcommand{\uRLLfd}{\fd\uRLL}

\newcommand{\LRLLLanghat}{\mathsf L\widehat\RLL_\Lang}
\newcommand{\CRLLLang}{\CRLL_\Lang}
\newcommand{\LRLLLang}{\mathsf L \RLL_\Lang}

\newcommand{\ind}{\mathsf{ind}}
\newcommand{\coind}{\mathsf{coind}}

\newcommand{\CRLLLangc}{\CRLLLang^c}
\newcommand{\LRLLLangc}{\LRLLLang^c}


\renewcommand{\c}[1]{#1^c}
\newcommand{\infrule}{\mathsf{r}}

\newcommand{\strat}[1]{\mathfrak{#1}}

\newcommand{\gtop}{\top_\Alphabet}


\newcommand{\Eloise}{\exists}
\newcommand{\Abelard}{\forall}

\newcommand{\prover}{\mathbf{P}}
\newcommand{\denier}{\mathbf{D}}

\newcommand{\cantor}{\mathcal C}


\newcommand{\LTL}{\mathsf{LTL}}
\newcommand{\muLTL}{\mu\LTL}
\newcommand{\nxt}{\bigcirc}
\newcommand{\U}{\mathbf{U}}
\newcommand{\props}{\mathbf P}
\newcommand{\limp}{\rightarrow}
\newcommand{\liff}{\leftrightarrow}

\newcommand{\interp}[2]{#2^{#1}}
\newcommand{\ef}[1]{#1^{\circ}}
\newcommand{\fe}[1]{#1^{\bullet}}

\title[An algebraic theory of $\omega$-regular languages]{An algebraic theory of $\omega$-regular languages, via $\mu\nu$-expressions}

\author{Anupam Das \and Abhishek De}
\address{School of Computer Science, University of Birmingham, UK}
\date{\today}

\begin{abstract}
Alternating parity automata (APAs) provide a robust formalism for modelling infinite behaviours and play a central role in formal verification. Despite their widespread use, the algebraic theory underlying APAs has remained largely unexplored. In recent work~\cite{DD24a}, a notation for non-deterministic finite automata (NFAs) was introduced, along with a sound and complete axiomatisation of their equational theory via \emph{right-linear algebras}. In this paper, we extend that line of work, in particular to the setting of infinite words. We present a dualised syntax, yielding a notation for APAs based on \emph{right-linear lattice} expressions, and provide a natural axiomatisation of their equational theory with respect to the standard language model of $\omega$-regular languages. The design of this axiomatisation is guided by the theory of fixed point logics; in fact,  the completeness factors cleanly through the completeness of the linear-time $\mu$-calculus.
\end{abstract}

\maketitle    

\section{Introduction}
\label{sec:introduction}

\subsection{A half century of \texorpdfstring{$\omega$}{}-automata theory}

$\Omega$-automata, \ie finite state machines running on infinite inputs, are useful for modelling behaviour of systems that are not expected to terminate, such as hardware, operating systems and control systems. The prototypical $\omega$-automaton model, \emph{B\"uchi automaton}, is widely used in model checking~\cite{VW94,GPVW96,GO01,Holzmann11}.   

The theory of $\omega$-regular languages, \ie languages accepted by $\omega$-automata, have been studied for more than half a century. B\"uchi's famous complementation theorem~\cite{Büchi90} for his automata is the engine underlying his proof of the decidability of monadic second-order logic (MSOL) over infinite words. Its extension to infinite trees, \emph{Rabin's Tree Theorem}~\cite{Rabin68}, is often referred to as the `mother of all decidability results'.

McNaughton~\cite{McNaughton66} showed that, while B\"uchi automata could not be determinised per se, a naturally larger class of acceptance conditions (Muller or parity) allowed such determinisation, a highly technical result later improved by Safra~\cite{Safra}. 
A later relaxation was the symmetrisation of the transition relation itself: instead of only allowing non-deterministic states, allow co-nondeterministic ones too. 
This has led to beautiful accounts of $\omega$-regular language theory via the theory of positional and finite memory games. 
The resulting computational model, \emph{alternating parity automaton} (APA), 
is now the go-to model in textbook presentations, e.g.~\cite{GTW03}. 
Indeed, their features more closely mimic those of logical settings where such symmetries abound, e.g. linear-time $\mu$-calculus~\cite{Vardi1996} and MSOL over infinite words.

\subsection{An algebraic approach}

In the finite world, the theory of regular languages have been axiomatised as \emph{Kleene Algebras} ($\KA$s). In fact, $\KA$s are part of a bigger cohort of \emph{regular algebras}  and they have been studied for several decades and completeness proofs for different variants have been obtained~\cite{Salomaa66,Krob91,Kozen94,Boffa90,Boffa95}. $\KA$s and various extensions have found applications in specification and verification of programs and networks~\cite{AFGJKSW14}. 

However, note $\KA$s and other regular algebras axiomatise the equational theory of \emph{regular expressions} as opposed to NFAs. Although they are equi-expressive, regular expressions are not quite a `notation' for NFAs. Nonetheless, NFAs may be given a bona fide notation by identifying them with \emph{right-linear grammars}.  Recall that a right-linear grammar is a CFG where each production has RHS either $aX$ or $\epsilon$. They may also be written as \emph{right-linear expressions}, by choosing an order for resolving non-terminals. Formally, \defname{right-linear expressions} (aka \defname{RLA expressions}), written $e,f,\dots$, are generated by:
\[
e,f,\dots \quad ::= \quad 
1 \quad \mid \quad 
 X \quad \mid \quad
e + f \quad \mid \quad a\cdot e \quad \mid \quad \mu X e
\]
for $a\in\Alphabet$, a finite \defname{alphabet} and $X \in \Var$, a countable set of \defname{variables}. Indeed~\cite{DD24a} takes this viewpoint seriously and proposed an alternative algebraic foundation of regular language theory, via \emph{right-linear algebras} ($\RLA$s). Notably, $\RLA$s are strictly more general than $\KA$s, as they lack any multiplicative structure. In particular, this means that $\omega$-languages naturally form a model of them (unlike $\KA$s). This is the starting point of the current work.

In this work, we investigate the algebraic structures induced by the theory of APAs. To do so, we dualise the (1-free)\footnote{This restriction imposed to so that the intended interpretation is just sets of $\omega$-words not $\le\omega$-words.} syntax of RLA expressions to obtain \emph{right-linear lattice} (RLL) expressions, formally generated by:
\[e,f,\dots \quad\bnf\quad   X \quad\mid\quad a\cdot e \quad\mid\quad  e+f \quad\mid\quad \mgt{e \cap f} \quad\mid\quad \mu X e(X) \quad\mid\quad \mgt{\nu X e(X)}\]    

Compared to RLA expressions, RLL expressions enjoy more symmetric relationships to games and consequently, are a notation for APA. Our main contribution is a sound and complete axiomatisation $\RLLLang$ of the theory of RLL expressions for the language model. 

\subsection{Roadmap}

In~\Cref{sec:prel}, we recall right-linear algebras and define RLL expressions, a notation for APAs. We identify several principles governing their behaviour in the standard model $\Lang$ of $\omega$-languages; namely, their interpretations satisfy a theory of bounded distributive lattices, certain lattice homomorphisms and least and greatest fixed points (of definable operators). To motivate the final axiomatisation in~\Cref{sec:axiomatisation}, we first syntactically recover complements in~\Cref{sec:complement}. In~\Cref{sec:completeness}, we prove the completeness of the axiomatisation by reducing it to the completeness of linear time $\mu$-calculus. We conclude with some remarks on the axiomatisation and comparison with existing literature in~\Cref{sec:conclusion}. For the sake of self-containment, some (now standard) results of cyclic proof theory are given in \cref{sec:eval-game}.  

\subsection{Related work} 

Two kinds of variations of $\KA$s are relevant to this work. Firstly, the generalisation of regular algebras to $\omega$-regular algebras~\cite{Wagner76,Cohen00,LS12,CLS15}, by axiomatising the theory of $\omega$-regular expressions, a generalisation of regular expressions admitting terms of the form $e^\omega$, for $e$ an $\omega$-regular expression. Secondly, following the idea of dualisation, dualising every binary operation in $\KA$s leads to \emph{action lattices}, an extension with meet (dual to the sum), and residuals (adjoint to the product). Since $\RLA$s do not have products, we do not need residuals in its dualisation -- so, perhaps, \emph{Kleene lattices}~\cite{Brunet17,DasPous18}, the extension of $\KA$s with meet is the closest cousin of our proposed right-linear lattices. 
\section{Right-linear lattice expressions for \texorpdfstring{$\omega$}{}-regular languages}
\label{sec:prel}

Let us fix a finite set $\Alphabet$ (the \defname{alphabet}) of \defname{letters}, written $a,b,$ etc., and a countable set $\Var$ of \defname{variables}, written $X,Y,$ etc.

\subsection{RLL expressions and \texorpdfstring{$\omega$}{}-regular languages}

Recall that RLL expressions, written $e,f,\dots$, are generated by:
\[e,f,\dots \quad\bnf\quad   X \quad\mid\quad a\cdot e \quad\mid\quad  e+f \quad\mid\quad e \cap f \quad\mid\quad \mu X e(X) \quad\mid\quad \nu X e(X)\]  

\noindent for $a\in\Alphabet$ and $X \in \Var$. We usually just write $ae$ instead of $a\cdot e$. A variable $X$ is said to occur \defname{freely} in an expression $e$ if it not under the scope of any binder $\mu X$ or $\nu X$. An expression is said to be \defname{closed} if it has no occurrences of free variables.  

\begin{remark}[0]
\label{rem:0}
The original presentation of right-linear expressions includes a symbol $0$ that was always interpreted as a unit for $+$ in structures over this syntax. Here we shall more simply just write $0\df \mu X X$, and remark on the consequences of this choice as we go.
\end{remark}

\noindent The intended interpretation of an RLL expression is a language of $\omega$-words over $\Alphabet$.

\begin{definition}[Interpretation]
Let us temporarily expand the syntax of RLL expressions to include each language $A\subseteq \Alphabet^\omega$ as a constant symbol. We interpret each closed expression (of this expanded language) as a subset of $ \Alphabet^\omega$ as follows:
\begin{itemize}
    \item $\lang A \df A$
    \item $\lang {e+f} \df \lang e \cup \lang f$
    \item $\lang{e\cap f} \df \lang e \cap \lang f$
    \item $\lang {ae} \df \{a \sigma \mid  \sigma \in \lang e\}$
    \item $\lang {\mu X e(X)} \df \bigcap_{A\subseteq\Alphabet^\omega} \{A \mid A \supseteq \lang {e(A)} \}$
    \item $\lang {\nu X e(X)} \df \bigcap_{A\subseteq\Alphabet^\omega} \{A \mid A  \subseteq \lang {e(A)} \} $
\end{itemize}
\end{definition}

\noindent Note that \cref{rem:0} is justified by this interpretation: indeed $\lang {\mu X X}$ is just the empty language.

\begin{remark}
    [$\top$]
    Dual to $0\df \mu X X$, we define $\top \df \nu X X$, that denotes the universal language in $\Lang$.
\end{remark}

To justify that $\mu$ and $\nu$ are indeed interpreted as \emph{fixed point} operators, we will first recall some terminology. Let $(S,\leqslant_S)$ be a complete lattice. Then, $x\in S$ is said to be a \defname{prefixed} (\defname{postfixed} respectively) point of a morphism $f:S\to S$ if $f(x)\leqslant_S x$ ($x\leqslant_S f(x)$ respectively). If $x$ is both a pre and postfixed point, it is called a \defname{fixed point} of $f$.  

\begin{theorem}[Knaster-Tarski theorem~\cite{Knaster-Tarski,Tarski1955ALF}]
\label{thm:Tarski}
Let $f:S\to S$ be a monotonic function. The set of fixed points of $f$ is non-empty and equipped with $\leqslant_S$ forms a complete lattice.
\end{theorem}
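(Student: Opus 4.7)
The plan is first to produce a distinguished fixed point by a direct construction, and then to bootstrap this to obtain the full complete lattice structure.

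I would begin with the set $P = \{x \in S : f(x) \leqslant_S x\}$ of prefixed points of $f$. This set is non-empty since it contains $\sup_S S$, the top of $S$. Let $a = \inf_S P$. For any $x \in P$, monotonicity gives $f(a) \leqslant_S f(x) \leqslant_S x$; taking the infimum over all $x \in P$ yields $f(a) \leqslant_S a$, so $a \in P$. Applying $f$ once more gives $f(f(a)) \leqslant_S f(a)$, hence $f(a) \in P$ and therefore $a \leqslant_S f(a)$ by minimality of $a$. Thus $a = f(a)$, and by construction $a$ is the least prefixed point, hence the least fixed point. A dual argument on the set of postfixed points $\{x \in S : x \leqslant_S f(x)\}$ produces the greatest fixed point, settling non-emptiness.

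For the lattice structure, given $T \subseteq \mathrm{Fix}(f)$ I would build its supremum inside $\mathrm{Fix}(f)$; infima follow by order-reversal. Let $s = \sup_S T$. Each $t \in T$ satisfies $t = f(t) \leqslant_S f(s)$, so $s \leqslant_S f(s)$. Consequently $f$ restricts to a monotone endomap on the upward-closed interval $U = \{x \in S : s \leqslant_S x\}$, which is itself a complete lattice. Applying the first paragraph to this restriction yields a least fixed point $\hat{s}$ of $f$ in $U$; by construction $\hat{s}$ is a fixed point of $f$ that is an upper bound of $T$, and it is the least such element of $\mathrm{Fix}(f)$.

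The only conceptually delicate point is that suprema and infima in $\mathrm{Fix}(f)$ need not coincide with those computed in $S$, so the lattice structure cannot simply be inherited from $S$. The interval-restriction trick is what bridges this gap: it reduces the construction of arbitrary suprema (dually, infima) in $\mathrm{Fix}(f)$ to the existence of a least (dually, greatest) fixed point on a complete sub-lattice, which the first paragraph already supplies. Everything else is routine verification of monotonicity and order bookkeeping.
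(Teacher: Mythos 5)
Your proof is correct and complete: the construction of the least fixed point as the infimum of the prefixed points, and the interval-restriction argument showing that every subset of $\mathrm{Fix}(f)$ has a supremum computed inside the sublattice $\{x : \sup_S T \leqslant_S x\}$, is exactly Tarski's original argument, and you rightly flag that suprema in $\mathrm{Fix}(f)$ need not agree with those in $S$. The paper itself states this theorem as a classical result with a citation and gives no proof, so there is nothing to diverge from.
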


Let us now point out that $\pow{\Alphabet^{\omega}}$ indeed forms a complete lattice under $\subseteq$, and closed under concatenation with letters on the left. Since all the operations are monotone, $\lang{\mu X e(X)}$ and $\lang{\nu X e(X)}$ are indeed the least and greatest fixed point of the operation $A\mapsto \lang{e(A)}$, by the Knaster-Tarski theorem.

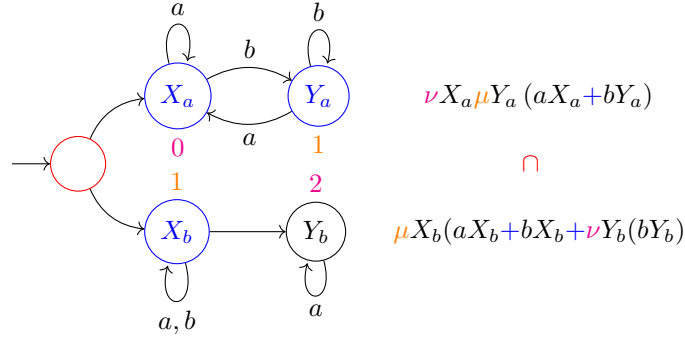
\begin{figure}[t]
\centering
\begin{tikzpicture}[baseline=(root)]
    \node[circle, draw, color=red] (root) at (0,0) {$\phantom{\cap\  }$ };
    \node[right=.5cm of root, yshift=.9cm, circle, draw, color=blue, label=below:{$\mgt 0$}] (Xa) {$X_a$};
    \node[right=1cm of Xa, circle, draw, color=blue,label=below:{$\orange 1$}] (Ya) {$Y_a$};
        \node[left=.5cm of root] (nil) {};
    \node[right=.5cm of Ya] (lin) {};
    \draw[->] (nil) to (root);
    
    \node[right=.5cm of root, yshift=-.9cm, circle, draw, color=blue,label=above:{$\orange 1$}] (Xb) {$X_b$};
    \node[right=1cm of Xb, circle, draw,label=above:{$\mgt 2$}] (Yb) {$Y_b$};
    
    \draw[->] (root) edge[bend left] (Xa);
    \draw[->] (Xa) edge[bend left] node[above] {$b$} (Ya);
    \draw[->] (Ya) edge[bend left] node[below] {$a$} (Xa);
    \draw (Xa) edge[loop above] node[above] {$a$} (Xa);
    \draw (Ya) edge[loop above] node[above] {$b$} (Ya);

    \draw[->] (root) edge[bend right] (Xb);
    \draw[->] (Xb) edge (Yb);
    \draw (Xb) edge[loop below] node[below] {$a,b$} (Xb);
    \draw (Yb) edge[loop below] node[below] {$a$} (Yb);



    \node[right=.5cm of Ya] {$\ \ \ \mgt \nu X_a \orange \mu Y_a \left(aX_a \blue + bY_a\right)$};
    \node[right=.5cm of Ya, yshift=-.9cm] {$\ \ \ \ \ \ \ \ \ \ \ \ \ \  \red \cap $};
    \node[right=.5cm of Yb] {$\orange \mu X_b (aX_b \blue + bX_b \blue + \mgt \nu Y_b (bY_b)$};
\end{tikzpicture}
\caption{The alternating parity automata $\mathbf A_{i_a\cap f_b}$.}
\label{fig:apa-eg}
\end{figure}

\begin{example}
\label{eg:exprs}
Let us consider some examples of RLL expressions and the languages they compute in $\Lang$, over the alphabet:
\begin{itemize}
    \item $i_a \df \nu X \mu Y (aX + bY)$ computes the language $I_a$ of words with infinitely many $a$s:
    \begin{itemize}
        \item First note that, for any language $A$, we have that $\mu Y (A + bY) $ computes $b^*A$.
        \item Now let us show that $I_a$ is a postfixed point of $X \mapsto \mu Y (aX + bY) $. By the above point, it suffices to show that $I_a \subseteq b^*aI_a$, which holds since every word $w$ with infinitely many $a$s can be written $w = b^*a w'$.
        \item Now suppose $B$ is another postfixed point, i.e.\ that $B\subseteq b^*aB$. Then we have $B\subseteq b^*aB \subseteq b^*a b^*a B \subseteq \cdots \subseteq (b^*a)^\omega = I_a$.
    \end{itemize}
    \item $f_b \df \mu X (bX + aX +  a\nu YaY)$ computes the language $F_b$ of words with at most finitely many $b$s:
    \begin{itemize}
        \item First note that, $\nu YaY$ computes $a^\omega$.
        \item By a similar argument as above, $F_b$ is a prefixed point of $X \mapsto bX+aX+aa^\omega$.
    \end{itemize}
    \item $i_a \cap f_b$ computes the language $I_a\cap F_b$ of words with infinitely many $a$s and at most finitely many $b$s. Note that over $\Alphabet=\{a,b\}$, $I_a\cap F_b=F_b$ but not in general (say when $\Alphabet=\{a,b,c\}$).
\end{itemize}    
\end{example}

As the readers might have expected, the range of $\lang\cdot$ is just the $\omega$-regular languages.

\begin{proposition}
\label{prop:omega-reg-have-munu-exps}
A language $L\subseteq\Alphabet^\omega$ is $\omega$-regular if and only if there is an RLL expression $e$ such that $\lang e=L$.    
\end{proposition}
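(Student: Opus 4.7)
The plan is to exploit the well-known equivalence between $\omega$-regular languages, alternating parity automata (APAs), and the linear-time $\mu$-calculus. Indeed, as emphasised in~\Cref{sec:introduction}, RLL expressions are essentially syntactic sugar for APAs (equivalently, for the letter-modality fragment of the linear-time $\mu$-calculus). Both inclusions then follow from mutually inverse translations whose correctness is standard.

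For the right-to-left direction, I would translate an RLL expression $e$ into an APA $\mathbf{A}_e$ by structural recursion, following the blueprint of~\Cref{fig:apa-eg}: each subexpression becomes a state; $a\cdot e'$ yields an $a$-transition to the state for $e'$; $e_1 + e_2$ and $e_1 \cap e_2$ give existential and universal branching states respectively; each occurrence of a variable $X$ is realised as a backedge to its binder; and the relative scoping and $\mu/\nu$ alternation of the binders induces the priority function (odd on $\mu$-variables, even on $\nu$-variables, with higher priority for outer binders). Soundness, \ie that $\lang e$ equals the language of $\mathbf{A}_e$, is the standard correspondence between the Knaster--Tarski interpretation of fixed points and parity acceptance, proved by induction on $e$ using positional determinacy of parity games. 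Since APAs recognise exactly the $\omega$-regular languages (a classical result of Emerson--Jutla), $\lang e$ is $\omega$-regular.

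For the left-to-right direction, I would begin from the classical normal form $L = \bigcup_{i=1}^n U_i \cdot V_i^\omega$ with each $U_i, V_i$ regular and $\epsilon \notin V_i$. Each $V_i$ is described by a right-linear grammar, and so (via $\mu$) by an RLA expression; wrapping with $\nu Z$ whose body feeds back to $Z$ in place of the terminating $\epsilon$ produces an RLL expression for $V_i^\omega$; prepending $U_i$ is achieved by a mutually-recursive right-linear block as in~\cite{DD24a}; finally summing with $+$ gives an expression for $L$. Alternatively and more uniformly, one can encode a nondeterministic B\"uchi automaton for $L$ directly, assigning each state a variable and capturing the B\"uchi condition by the alternation pattern $\nu X \mu Y$, $X$ being reset at accepting states and $Y$ elsewhere, exactly as illustrated by $i_a$ in~\Cref{eg:exprs}.

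The principal subtlety lies in the soundness of the APA translation, namely reconciling the denotational Knaster--Tarski reading of $\mu$ and $\nu$ with the operational parity game semantics of the translated automaton. This correspondence is entirely standard for the (linear-time) $\mu$-calculus, and in fact the cyclic-proof-theoretic machinery summarised in \Cref{sec:eval-game} supplies precisely what is needed; the proposition itself thus amounts to unfolding the translations and invoking these well-known equivalences.
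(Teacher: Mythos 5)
Your proposal is correct and follows essentially the same route as the paper: the harder direction is handled by compiling $e$ into an alternating parity automaton $\mathbf A_e$ (the paper builds its state set from the Fischer--Ladner closure, which is just the formal version of your ``subexpressions with backedges to binders'') and proving $\lang e=\lang{\mathbf A_e}$ via the evaluation/acceptance game with signatures from \cref{sec:eval-game}, while the converse is delegated to the inductive normal form $\bigcup_i U_i\cdot V_i^\omega$ as in~\cite{DD24a}, exactly as you suggest. The only cosmetic difference is that the paper states the automaton construction once over $\fl e$ rather than by structural recursion, and likewise relies (as you do) on the standard fact that APAs, even with $\epsilon$-transitions, recognise only $\omega$-regular languages.
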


One direction, exhaustion of all $\omega$-regular languages, follows swiftly from the inductive definition of the set of all $\omega$-regular languages and was established in previous work~\cite{DD24a}, without making use of $\cap$. To prove the converse, we will define an APA $\mathbf A_e$ for each expression $e$ such that $\lang e = \lang{\mathbf A_e}$.

\begin{definition}
[Fischer-Ladner]
\label{def:fl}
Define $\flred$ as the smallest relation on expressions satisfying:
\begin{itemize}
    \item $ae \flred e$.
    \item $e_0\star e_1 \flred e_i$, for $i\in \{0,1\}$ and $\star \in \{+,\cap\}$.
    \item $\sigma X e(X) \flred e(\sigma Xe(X))$, for $\sigma \in \{\mu,\nu\}$.
\end{itemize}
Write $\leqfl$ for the reflexive transitive closure of $\flred$.
The \defname{Fischer-Ladner ($\FL$) closure} of an expression $e$, written $\fl e$, is $\{f\leqfl e\}$.
We also write $e\subform f$ if $e$ is a subformula of $f$, in the usual sense.
\end{definition}

It is well-known that $\fl e $ is always finite. 
This follows by induction on the structure of $e$, relying on the equality $\fl{\sigma X e} = \{\sigma X e\} \cup \{f [ \sigma X e / X] : f \in \fl e\}$ (see, e.g., \cite{DD24a} for further details).

From here we can readily define $\mathbf A_e$ with:
\begin{itemize}
    \item \emph{States}:  $\fl e$, with expressions $0,f+g$ existential and expressions $\top,f\cap g$ universal.\footnote{Again, it does not matter whether other expressions are existential or universal states, as there is a unique instance of $\flred$ from them.} The initial state is $e$.
    \item \emph{Transitions}: 
    \begin{itemize}
        \item $af \underset a \to f$ whenever $af \in \fl e$; and,
        \item $g \to g'$ whenever $g\flred g'$ and $g$ is not of form $af$.
    \end{itemize} 
    \item \emph{Colouring}: any function $c_e: \fl e \to \Nat $ s.t.:
    \begin{itemize}
        \item $c_e$ is monotone wrt subformulas, i.e.\ if $f\subform g \implies c(f) \leq c(g)$; and,
        \item $c_e$ assigns $\mu$ and $\nu$ formulas odd and even numbers, respectively, i.e.\ always $c_e(\mu Xf(X))$ is odd and $c_e(\nu X f(X))$ is even.
    \end{itemize}
\end{itemize}

\begin{theorem}
\label{thm:lang-a-e}
For every $e$, $\lang e=\lang{\mathbf A_e}$.
\end{theorem}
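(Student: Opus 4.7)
The plan is to factor the proof through the game semantics of fixed-point formulas. For a word $w = w_0 w_1 \cdots \in \Alphabet^\omega$, I would set up the \emph{evaluation game} $G(e, w)$ whose positions are pairs $(f, n) \in \fl e \times \Nat$: Eloise controls positions of the form $(f + g, n)$, Abelard controls $(f \cap g, n)$, unique-successor positions (letters and fixed-point unfoldings) move deterministically, and the parity condition assigns to an infinite play the parity of the greatest $c_e$-colour visited infinitely often. A play stuck at $(ag, n)$ with $w_n \neq a$ is lost by Eloise, and the conventions at $0 = \mu X X$ and $\top = \nu X X$ are read off directly from the unfolding rule. By construction, $G(e,w)$ is (isomorphic to) the APA acceptance game of $\mathbf A_e$ on $w$ from initial state $e$, so $w \in \lang{\mathbf A_e}$ iff Eloise wins $G(e, w)$ from $(e, 0)$.

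The theorem thus reduces to \emph{adequacy} of the evaluation game, i.e.\ $w \in \lang e$ iff Eloise wins $G(e, w)$ from $(e, 0)$, which is a classical result of fixed-point logic (the referenced \cref{sec:eval-game} recalls this material). For the $(\Leftarrow)$ direction I would induct on the closed expression $e$. The cases of $+$, $\cap$, and $a \cdot (-)$ follow by restricting Eloise's strategy. For $\nu X f(X)$, the set $A_\nu \df \{w : \text{Eloise wins from } (\nu X f(X), 0)\}$ is a postfixed point of $A \mapsto \lang{f(A)}$: Eloise's positional winning strategy (obtained by positional determinacy of parity games) supplies a play-invariant witnessing $A_\nu \subseteq \lang{f(A_\nu)}$, so $A_\nu \subseteq \lang{\nu X f(X)}$ by \cref{thm:Tarski}. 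For $\mu X f(X)$, the key is that $c_e$ assigns odd colours to $\mu$-formulas and is monotone with respect to $\subform$: hence any infinite play that keeps regenerating the outermost $\mu$-variable is lost by Eloise. Therefore on a winning strategy every $\mu$-unfolding has a well-founded signature, and one concludes by transfinite induction on these signatures that $A_\mu \subseteq \lang{\mu X f(X)}$. The $(\Rightarrow)$ direction is symmetric, constructing Eloise's strategy from semantic witnesses---signature functions for $\mu$-subformulas and postfixed-point witnesses for $\nu$-subformulas.

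The main obstacle is the fixed-point cases, specifically the bookkeeping for nested $\mu$'s and $\nu$'s when extracting signatures or invariants from winning strategies. The monotonicity of $c_e$ along $\subform$ ensures that a unique ``dominant'' fixed point is regenerated infinitely often on any infinite play, which is precisely what makes the signature arguments go through under alternation of binders. Since all of this is standard in the cyclic proof theory of the $\mu$-calculus, I expect the main text treatment to be a short reduction to adequacy of evaluation games, with the technical induction developed (or cited) in the appendix \cref{sec:eval-game}.
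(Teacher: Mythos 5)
Your proposal matches the paper's proof: the paper likewise observes that the evaluation game on $(w,e)$ \emph{is} the acceptance game of $\mathbf A_e$ on $w$ (Eloise strategies = run-trees, winning = accepting) and then concludes by the Evaluation Lemma (\cref{lem:eval}), whose proof via least signatures for $\mu$-unfoldings and the dual argument for $\nu$ is exactly the standard material you sketch. One small convention slip: since $c_e$ is monotone with $f\subform g\implies c_e(f)\le c_e(g)$ and the winner of an infinite play is determined by the \emph{subformula-minimal} (hence minimally coloured) infinitely recurring expression, the induced condition is a min-parity, not max-parity, condition — harmless, but you should flip either the colouring or your acceptance condition to be consistent with the paper's.
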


Note that \cref{prop:omega-reg-have-munu-exps} follows from \cref{thm:lang-a-e}. To prove \Cref{thm:lang-a-e}, we will introduce a game-theoretic mechanism for deciding word membership in $\lang e$. This was introduced in~\cite{DD24a} without $\cap$ and can be straightforwardly lifted to our setting (see \cref{sec:eval-game}). We will simply illustrate $\mathbf A_e$ with an example and move on.

\begin{example} Consider $i_a\cap f_b$ as defined in~\cref{eg:exprs}. If we follow the construction above, we have the APA in~\cref{fig:apa-eg} where blue states are existential, red states are universal, magenta is an even colour, and orange is an odd colour. 

Let us check that $\lang{\mathbf A_{i_a\cap f_b}}$ is the set of all words with infinitely many $a$s and finitely many $b$s. Let $w$ be such a word. Then, a path in the run tree visits $X_a$ infinitely often or loops on $Y_b$. In both cases, it is accepting. Now suppose $w$ is a word that contains infinitely many $b$s. Then, its run tree contains a path that loops on $X_a$. This path is not accepting. Similarly, $w$ is a word containing finitely many $a$s then its run tree contains the bad path looping on $Y_a$. 

%
%
%
\end{example}

\begin{remark}[A subtlety about $\varepsilon$]
Note that we have allowed $\varepsilon$-transitions in our APAs in order to mimic the RLL syntax as closely as possible. Let us point out that our APAs indeed still only compute the $\omega$-regular languages.
\end{remark}

\subsection{Some properties of the intended model}
\label{sec:props-of-lang}

Let us take a moment to remark upon some principles valid in the intended interpretation $\Lang$ of RLL expressions, in order to motivate the axiomatisation we introduce later. 
As usual we write $e\leq f \df e+f =f$, equivalently $e = e\cap f$ (so in $\Lang$, $\leq$ just means $\subseteq)$.
First:

\begin{itemize}
    \item $(0,\top, +,\cap)$ forms a bounded distributive lattice:\footnote{Some of these axioms are redundant, but we include them all to facilitate the exposition.}  
    \begin{equation}
        \label{eq:dist-lattice}
        \begin{array}{r@{\ = \ }l}
        e + 0 & e \\
        e + (f + g) & (e + f ) + g\\
        e + f & f + e \\
        e+e & e \\
        e + (e\cap f) & e\\
        e + (f\cap g) & (e+ f)\cap(e+ g)
    \end{array}
    \qquad
     \begin{array}{r@{\ = \ }l}
         e \cap \top & e\\
         e \cap (f \cap g) & (e \cap f ) \cap g\\
         e \cap f & f \cap e\\
         e \cap e & e \\
         e \cap (e + f) & e \\
         e \cap (f+ g) & (e\cap f)+(e\cap g)
    \end{array}
    \end{equation}
\end{itemize}


\begin{itemize}
    \item Each $a\in \Alphabet$ is a (lower) semibounded lattice homomorphism:
    \begin{equation}
        \label{eq:letter-lbdd-lattice-homo}
         \begin{array}{r@{\ = \ }l}
     a0 & 0 \\
     a(e+f) & ae + af\\
    a(e\cap f) & ae \cap af
    \end{array}
    \end{equation}
\end{itemize}

In particular, of course $\Lang \not \models a\top = \top$, so in this sense $0$ and $\top$ do not behave dually in $\Lang$. 
Instead we have a variant of this principle, indicating that the homomorphisms freely factor the structure:

\begin{itemize}
    \item The ranges of $a\in\Alphabet$ partition the domain:
    \begin{equation}
    \label{eq:letters-freely-cogenerate-structure}
        \begin{array}{r@{\ = \ }ll}
         ae \cap bf & 0 & \text{whenever $a\neq b$} \\
         \top & \sum\limits_{a\in \Alphabet} a\top
    \end{array}
    \end{equation}
\end{itemize}

Finally, $\Lang $ is a complete lattice and so interprets the least and greatest fixed points as such. 
Being a complete lattice is a \emph{second-order} property, but we have the following first order (even quasi-equational) consequences:
\begin{itemize}
\item $\mu X e(X)$ is a least prefixed point of $X\mapsto e(X)$:
\begin{equation}
    \label{eq:mu-is-least-prefix}
    \begin{array}{ll}
        \text{(Prefix)} & e(\mu X e(X)) \leq \mu X e(X)  \\
         \text{(Induction)} & e(f) \leq f \implies \mu X e(X) \leq f 
    \end{array}
\end{equation}
    \item $\nu X e(X)$ is a greatest postfixed point of $X\mapsto e(X)$:
    \begin{equation}
        \label{eq:nu-is-greatest-postfix}
        \begin{array}{ll}
         \text{(Postfix)} & \nu X e(X)\leq e(\nu X e(X)) \\
         \text{(Coinduction)} & f\leq e (f) \implies f\leq \nu X e(X)
    \end{array}
    \end{equation}
Note that Induction and Coinduction are axiom \emph{schemas}. In fact, it is quite standard that first order axiomatisation of (Co)Induction presented as schema (\cf Peano Arithmetic). 
\end{itemize}

\begin{example}
[0]
\label{ex:0-is-unit}
Recall $0:=\mu XX$ and $\top:=\nu XX$. Indeed $0\leq e$ (i.e.\ $0+e=e$) is a consequence of the axioms \eqref{eq:mu-is-least-prefix} above: it follows by Induction from $e\leq e$. Dually $e\leq \top$ follows from \eqref{eq:nu-is-greatest-postfix}.
\end{example}

Recall that RLA expressions are notation for NFAs and thus can be duly interpreted as regular languages over \emph{finite} words. In previous work~\cite{DD24a}, soundness and completeness of a subset of the above mentioned axioms for RLA expressions with respect to the language interpretation (also written $\Lang$ hedging the risk of confusion). Writing $\RLA$ for the subset of axioms  from \cref{eq:dist-lattice,eq:letter-lbdd-lattice-homo,eq:letters-freely-cogenerate-structure,eq:mu-is-least-prefix,eq:nu-is-greatest-postfix} not involving $\cap, \top, \nu$, we have:

\begin{theorem}
[\cite{DD24a}]
\label{thm:RLA-sound-complete}
For RLA expressions $e,f$, $\RLA \proves e=f$ $\iff$ $\lang{e}=\lang{f}$.
\end{theorem}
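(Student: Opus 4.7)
The proof splits into soundness and completeness. Soundness ($\Rightarrow$) is routine: verify that each axiom of $\RLA$ --- the distributive lattice laws for $(0,+)$, the letter-homomorphism laws $a0=0$ and $a(e+f)=ae+af$, and the least-prefixed-point principle for $\mu$ --- is valid under the finite-word language interpretation. The lattice and homomorphism axioms unfold to set-theoretic identities, while the $\mu$ axioms follow from the Knaster-Tarski characterisation of $\mu X e(X)$ as the least prefixed point of $A \mapsto \lang{e(A)}$.

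Completeness ($\Leftarrow$) is the substantive direction, and the plan is automata-theoretic. First, I would use the fixed-point unfolding and induction axioms to show that every closed RLA expression is $\RLA$-provably equivalent to a \emph{normal form} --- a system of guarded simultaneous right-linear equations $X_i = \sum_j a_{ij} X_j + \epsilon_i$ with $\epsilon_i \in \{0,1\}$, realised as nested $\mu$-binders via Beki\v{c}-style derivable rules. Second, I would establish a \emph{uniqueness of guarded solutions} lemma: any two solutions of such a guarded right-linear system are $\RLA$-provably equal, each direction following from a single application of the induction axiom against the other purported solution. Third, given two expressions with $\lang e = \lang f$, I would translate to their NFA normal forms and exhibit a common witnessing automaton to which both normal forms are $\RLA$-provably equal, via the standard bisimulation between two NFAs recognising the same language.

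The principal obstacle is this last step: since $\RLA$ has no product (concatenation) operator, one cannot form an `intersection automaton' at the syntactic level by directly pairing states. The resolution, exploiting the tight correspondence between RLA expressions and NFAs established in~\cite{DD24a}, is to encode the bisimulation relation between two NFAs into the additive structure via $+$ (representing sets of states by formal sums of variables) and to use the induction schema to transport bisimilarity into provable equality --- a right-linear analogue of Kozen's bisimulation-driven completeness argument for Kleene Algebra, adapted to work at the grammar/NFA level rather than going through regular expressions.
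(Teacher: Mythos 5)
You should first note that the paper does not actually prove this statement: Theorem~\ref{thm:RLA-sound-complete} is imported wholesale from~\cite{DD24a}, so there is no in-paper proof to compare against, and your proposal can only be judged on its own terms. The soundness half and the general architecture of the completeness half (guarded right-linear normal forms, least-solution reasoning, passage through a common automaton) are the right shape for this kind of result.

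There are, however, two concrete soft spots in the completeness sketch. First, your ``uniqueness of guarded solutions'' lemma as stated does not follow from ``a single application of the induction axiom against the other purported solution'': the Induction schema of \eqref{eq:mu-is-least-prefix} only ever yields $\mu X e(X) \leq f$ from $e(f)\leq f$, i.e.\ it places the $\mu$-term \emph{below} an arbitrary solution; to place an arbitrary solution below the $\mu$-term you would need a coinduction/greatest-fixed-point principle, which $\RLA$ does not have (there is no $\nu$). What you actually get, and what suffices, is mutual inequality between two expressions \emph{each presented as a $\mu$-term}, via provable simulations of one system inside the other --- but that is a different lemma and it is where the work lives. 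Second, ``the standard bisimulation between two NFAs recognising the same language'' does not exist: language-equivalent NFAs need not be bisimilar, and bisimilarity only coincides with language equivalence after determinisation. Your formal-sums-of-states encoding is implicitly the subset construction, but its $\RLA$-provable correctness is precisely the step that cannot be copied from Kozen's $\KA$ argument, since $\RLA$ has no product with which to form transition matrices; treating it as routine leaves the main technical burden of the proof unaddressed.
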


The goal of the present work is to establish a similar sort of result for RLL expressions, in the $\omega$-regular world rather than the (finitely) regular world. 
\section{Boolean subalgebra of RLL expressions}
\label{sec:complement}

As the $\omega$-regular languages are closed under complementation, we actually have that the initial term submodel of RLL expressions in $\Lang$ forms a Boolean algebra.
In this section, we shall inline this structure axiomatically.

\subsection{Complements}
We can define complements of the RLL expressions, wrt $\Lang$, quite simply, thanks to closure of the syntax under duality:
\begin{definition}
    [Complement]
    Define $\c e $ by induction on an expression $e$:
    \begin{itemize}
        \item $\c {(ae)} \df a\c e + \sum\limits_{b\neq a} b\top$
        \item $\c X \df X$
        \item $\c{(e+f)} \df \c e \cap \c f$
        \item $\c{(e\cap f)} \df \c e + \c f$
        \item $\c{(\mu X e)} \df \nu X \c e$
        \item $\c{(\nu X e)} \df \mu X \c e $
    \end{itemize}
\end{definition}

\begin{proposition}
$e$ and $\c e $ are complementary in $\Lang$, i.e.\ $\lang {\c e } = \Alphabet^\omega \setminus \lang e$ for any closed expression $e$.      
\end{proposition}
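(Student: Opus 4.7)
The plan is to proceed by induction on $e$, but a direct induction on closed expressions fails at the binder cases because, e.g., $\nu X \c e$ evaluates a body with $X$ free. I would therefore strengthen the claim to open expressions: writing $\c A \df \Alphabet^\omega \setminus A$ for a language, for every expression $e$ with free variables among $X_1, \ldots, X_n$ and every choice of languages $A_1, \ldots, A_n \subseteq \Alphabet^\omega$,
\[
\lang{\c e[\vec A/\vec X]} \;=\; \Alphabet^\omega \setminus \lang{e[\c{\vec A}/\vec X]}.
\]
Setting $n = 0$ recovers the proposition.

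The induction on $e$ runs as follows. The variable case is immediate since $\c X = X$ and $\c{\c A} = A$. The cases $e + f$ and $e \cap f$ reduce to De Morgan using the IH. For the letter case $ae$, I would invoke the fact that $\Alphabet^\omega$ partitions as $\bigcup_{c \in \Alphabet} c\,\Alphabet^\omega$ (the semantic content of \eqref{eq:letters-freely-cogenerate-structure}), together with injectivity of prepending $a$, to get
\[
\Alphabet^\omega \setminus a\lang{e[\c{\vec A}/\vec X]} \;=\; a\bigl(\Alphabet^\omega \setminus \lang{e[\c{\vec A}/\vec X]}\bigr) \;\cup\; \bigcup_{b \neq a} b\,\Alphabet^\omega,
\]
whose right-hand side equals $\lang{\c{(ae)}[\vec A/\vec X]}$ by the IH applied to $e$ and unfolding the definition of $\c{(ae)}$.

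The interesting cases are the fixed points. For $\mu X e(X)$ (with other free variables elided for clarity), the IH shows that the monotone operator $A \mapsto \lang{\c e(A)}$ on $\pow{\Alphabet^\omega}$ is the complement-conjugate $A \mapsto \c(\lang{e(\c A)})$ of the operator $F : A \mapsto \lang{e(A)}$. The result then follows from a standard duality: for any monotone $F$ on $\pow{\Alphabet^\omega}$, the greatest fixed point of $\c \circ F \circ \c$ equals $\c$ applied to the least fixed point of $F$. This is a direct consequence of Knaster--Tarski (\cref{thm:Tarski}), since $\c$ is an order-reversing involution swapping prefixed and postfixed points. The case $\nu X e(X)$ is symmetric.

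The main obstacle is purely bookkeeping: because $\c$ acts as the identity on variables while the semantic statement must complement the language parameters, the generalisation to open expressions has to be set up with some care so that the substitution--composition at binder cases lines up correctly. Once the generalised statement is in place, each inductive case reduces to a well-known set-theoretic identity.
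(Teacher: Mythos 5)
Your proposal is correct and follows essentially the same route as the paper: both strengthen the claim to open expressions with language parameters (complementing the parameters to match $\c X = X$), handle $+$, $\cap$, and $a\cdot$ by De Morgan and the partition of $\Alphabet^\omega$ into $\bigcup_{c}c\Alphabet^\omega$, and settle the binder cases via the fact that complementation is an order-reversing involution exchanging prefixed and postfixed points. The only cosmetic difference is that you invoke the abstract identity $\mathrm{gfp}(\c{}\circ F\circ\c{}) = \c{(\mathrm{lfp}(F))}$ where the paper unfolds the same computation directly on the set-builder definitions of $\lang{\mu X e}$ and $\lang{\nu X e}$.
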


\begin{proof}
In order to prove by induction, we will strengthen the statement. Let $e(X_1,\dots,X_n)$ be an RLL expression  with free variables $X_1,\dots,X_n$. We claim $\lang{e(A_1,\dots,A_n)^c}=\Alphabet^\omega\setminus \lang{e^c(\Alphabet^\omega\setminus A_1^c,\dots,\Alphabet^\omega\setminus A_n)}$ where $A_1,\dots,A_n$ are arbitrary languages over $\omega$-words. Now we induct on $e$.

\begin{itemize}
    \item Suppose $e=X$ then it is immediate. 
    \item Suppose $e=af$. Then
    \begin{align*}
        \lang{e^c}&= \lang{af^c+\sum_{b\ne a}b\top}\\
                  &= a\lang{f^c}\cup\bigcup_{b\ne a}b\Alphabet^\omega &&[\text{Definition of }\Lang]\\
                  &= a(\Alphabet^\omega\setminus\lang{f})\cup\bigcup_{b\ne a}b\Alphabet^\omega &&[\text{Hypothesis}]\\
                  &=(a\Alphabet^\omega\setminus a\lang{f})\cup\bigcup_{b\ne a}b\Alphabet^\omega\\
                  &=\Alphabet\Alphabet^\omega\setminus\lang{af}\\
                  &=\Alphabet^\omega\setminus\lang{e}&&[\because\Alphabet\Alphabet^\omega=\Alphabet^\omega]
    \end{align*}
    \item When $e=f+g$ or $e=f\cap g$, it is simple De Morgan reasoning.
    \item Suppose $e=\mu Xf(X)$. Then
    \begin{align*}
        \lang{e^c}&=\lang{\nu Xf(X)^c}\\
                  &=\bigcup_{A\subseteq\Alphabet^\omega}\{A\mid A\subseteq\lang{f(A)^c}\}\\
                  &=\bigcup_{A\subseteq\Alphabet^\omega}\{A\mid A\subseteq\Alphabet^\omega\setminus\lang{f(\Alphabet^\omega\setminus A)}\}&&[\text{Hypothesis}]\\
                  &=\bigcup_{A\subseteq\Alphabet^\omega}\{A\mid \lang{f(\Alphabet^\omega\setminus A)}\subseteq\Alphabet^\omega\setminus A\}\\
                  &=\Alphabet^\omega\setminus\bigcap_{A\subseteq\Alphabet^\omega}\{\Alphabet^\omega\setminus A\mid \lang{f(\Alphabet^\omega\setminus A)}\subseteq\Alphabet^\omega\setminus A\}
    \end{align*}
    \item The case when $e=\nu X f(X)$ is symmetric. \qedhere
\end{itemize}

\end{proof}

Thus the set of RLL expressions denote a Boolean subalgebra of $\Lang$, a fact subsumed by adequacy for $\omega$-regular languages, \cref{prop:omega-reg-have-munu-exps}.   Of course duality of $+,\cap$ hold in any bounded distributive lattice. The homomorphism axioms also guarantee that our definition of $\c{(ae)}$ is well-behaved:

    \begin{example}
    \label{ex:compl-of-action-in-lang}
        Let $\mathfrak L$ be a bounded distributive lattice (i.e.\ a model of \eqref{eq:dist-lattice}) satisfying \cref{eq:letter-lbdd-lattice-homo,eq:letters-freely-cogenerate-structure}, and suppose $A$ has a complement $\c A$ in $\mathfrak L$.\footnote{Recall that complements are unique in distributive lattices.} 
        Then $aA$ has complement $\c{(aA)} = a\c A + \sum\limits_{b\neq a} b\top$:
       \[
        \begin{array}{r@{\ \implies \ }ll}
            0 = A\cap \c A & 0 = aA \cap a\c A & \text{by \eqref{eq:letter-lbdd-lattice-homo}} \\
            & 0 = (aA \cap a\c A) + \sum\limits_{b\neq a} (aA \cap  b\top) & \text{by \eqref{eq:letters-freely-cogenerate-structure}} \\
            & 0 = aA \cap (a\c A + \sum\limits_{b\neq a} b\top) & \text{by distributivity} \\
            & 0 = aA \cap \c{(aA)} & \text{by definition}
       \end{array}
       \]
Similarly, one can show $\top=A+A^c\implies \top=aA+(aA)^c$.
    \end{example}

However, the issue with the principles thusfar, \cref{eq:dist-lattice,eq:letter-lbdd-lattice-homo,eq:letters-freely-cogenerate-structure,eq:mu-is-least-prefix,eq:nu-is-greatest-postfix}, is that they do not guarantee such duality of $\mu$ and $\nu$.
Let us address this issue now.

\subsection{Incompleteness strikes!}
Not all models of \cref{eq:dist-lattice,eq:letter-lbdd-lattice-homo,eq:letters-freely-cogenerate-structure,eq:mu-is-least-prefix,eq:nu-is-greatest-postfix} interpret $e$ and $\c e$ as complements. 
Indeed it is well known that there are even completely distributive lattices, let alone models of \cref{eq:dist-lattice,eq:letter-lbdd-lattice-homo,eq:letters-freely-cogenerate-structure,eq:mu-is-least-prefix,eq:nu-is-greatest-postfix}, that are not even Heyting algebras, let alone Boolean algebras.
Still, this does not quite yet give unprovability of the complementary laws for closed expressions (which carve out a substructure of a model). 
Indeed in even complete distributive lattices $\mu $ and $\nu$ are at least dual, in the sense that they \emph{preserve} complements. 
Let us develop an appropriate counterexample, exploiting the incompleteness of the lattice structure:

\begin{example}
    [Incompleteness]
    \label{ex:incompleteness-cantor-topology}
 Consider the Cantor topology $\cantor$ on $\Alphabet^\omega$: $A\subseteq \Alphabet^\omega$ is \emph{open} if it is a (possibly infinite) union of sets of form $a_1\cdots a_n \Alphabet^\omega$. 
 $\cantor$ is closed under finite meets and infinite joins, as it is a topology, so it forms a (bounded) join-complete lattice.
 So we have:
 \begin{itemize}
     \item $\cantor$ satisfies \eqref{eq:dist-lattice}, under the usual set-theoretic union and intersection; and,
     \item We can interpret least and greatest fixed points in $\cantor$ by setting, for monotone open operators $F$:
     \begin{itemize}
         \item $\cantor(\mu F) \df \bigcup\limits_{\alpha \in \mathsf{Ord}} F^\alpha(\emptyset)$;\todo{could define in more detail or refer to earlier} and,
         \item $\cantor(\nu F) \df \bigcup\limits_{A\subseteq F(A)} A$.
     \end{itemize}
     where $F^\alpha (X)$ is defined by transfinite induction on $\alpha$ as follows:
     \begin{itemize}
         \item $F^0(X):=X$;
         \item $F^{\alpha+1}:=F(F^{\alpha}(X))$; and,
         \item $F^\lambda(X):=\displaystyle\bigcup_{\beta\in\gamma}F^\beta(X)$ for limit ordinal $\gamma$.
     \end{itemize}
     It is not difficult to see that these interpretations of $\mu F$ and $\nu F$ are always least/greatest pre/post fixed points, respectively, in $\cantor$, as long as $F$ is monotone.
     Thus $\cantor$ furthermore satisfies \cref{eq:mu-is-least-prefix,eq:nu-is-greatest-postfix}.
 \end{itemize}
Now define the homomorphisms $a\in \Alphabet$ in $\cantor$ just as in $\Lang$: $aA \df \{aw : w \in A\}$. Clearly this is an open map and, under this interpretation, $\cantor$ satisfies \cref{eq:letter-lbdd-lattice-homo,eq:letters-freely-cogenerate-structure} as it is a substructure of $\Lang$.

However the denotation of greatest fixed points in $\cantor $ may be smaller than in $\Lang$, as its definition as a union of postfixed points ranges over only open sets, not all languages.
Indeed we have:
\begin{itemize}
    \item $\cantor (\nu X (aX)) = \emptyset$. For this, reasoning in $\cantor$, note that surely $\nu X (aX) \leq \top$ by boundedness, and so $\nu X (aX) \leq a^n\top$ for all $n\in \Nat$, by monotonticity and since $\nu X(aX) $ is a fixed point of $X\mapsto aX$.
    The only nonempty subset of $\Alphabet^\omega$ satisfying this property is $\{a^\omega\}$, but this is not open and so does not belong to $\cantor$.
    On the other hand, evidently $a\emptyset = \emptyset$.
    \item $\cantor{\c{(\nu X (aX))}} \neq \Alphabet^\omega$. Reasoning in $\cantor$, we have that $\c{(\nu X (aX))} = \mu X (aX + \sum\limits_{b\neq a}b\top) $, which (necessarily) has the same denotation in $\cantor $ as in $\Lang$: the set of words with at least one letter $b\neq a$. 
\end{itemize}
Thus $\nu X(aX)$ and $\c{(\nu X (aX))} $ are not complementary in $\cantor$.
Since $\cantor$ is a model of \cref{eq:dist-lattice,eq:letter-lbdd-lattice-homo,eq:letters-freely-cogenerate-structure,eq:mu-is-least-prefix,eq:nu-is-greatest-postfix}, it is immediate that this set of axioms is incomplete for $\Lang$: it does not prove $\top = \nu X(aX) + \c{(\nu X (aX))} $.
\end{example}

The issue for \cref{eq:dist-lattice,eq:letter-lbdd-lattice-homo,eq:letters-freely-cogenerate-structure,eq:mu-is-least-prefix,eq:nu-is-greatest-postfix}, towards completeness for $\Lang$, is that, in the absence of completeness of the lattice, it is not immediately clear that $\mu$ and $\nu$ are dual. 
Duality is derivable for $+ $ and $\cap$ from \cref{eq:dist-lattice}, but the infinitary nature of the fixed points means that it does not follow as a consequence of \cref{eq:dist-lattice,eq:letter-lbdd-lattice-homo,eq:letters-freely-cogenerate-structure,eq:mu-is-least-prefix,eq:nu-is-greatest-postfix}. 
\section{An axiomatisation}
\label{sec:axiomatisation}

In this section, we will develop an axiomatisation $\RLLLang$ for equations over RLL expressions that are valid in $\Lang$. Towards a definition of our ultimate axiomatisation, let us give a final property in $\Lang $: 


\begin{itemize}
    \item  $\mu$ and $\nu$ are dual:
      \begin{equation}
        \label{eq:mu-nu-are-dual}
        \begin{array}{l}
             \forall X,Y ( \top \leq X + Y \implies \top \leq e(X) + f(Y)) \implies \top \leq \mu X e(X) + \nu Y f(Y) \\
             \forall X,Y (X\cap Y \leq 0 \implies e(X) \cap f(Y) \leq 0 ) \implies \mu X e(X) \cap \nu Y f(Y) \leq 0
        \end{array}
    \end{equation}
\end{itemize}


It is not difficult to see that the above principles hold in any completely distributive lattice, not just in $\Lang$, by induction on the closure ordinals of fixed points. 
However, unlike completeness, the principle above is first-order, not second-order.
Note also that the principle above does not state the \emph{existence} of complements, just that $\mu$ and $\nu$ behave well wrt complements in the same way that $+$ and $\cap $ do. For all these reasons it is quite natural to include \eqref{eq:mu-nu-are-dual} natively within any `right linear lattice axiomatisation' for $\Lang$. We are now ready to axiomatise the right-linear lattice theory of $\Lang$.

\begin{definition}
    Write $\RLLLang$ for the theory axiomatised by \cref{eq:dist-lattice,eq:letter-lbdd-lattice-homo,eq:letters-freely-cogenerate-structure,eq:mu-is-least-prefix,eq:nu-is-greatest-postfix,eq:mu-nu-are-dual}.
\end{definition}

Our main result is that this axiomatisation is indeed sound and complete for the RLL theory of $\Lang$:
\begin{theorem}[Soundness and completeness of $\RLLLang$]
\label{thm:RLL-soudness-completeness}
    $\Lang \models e=f$ $\iff$ $\RLLLang \proves e=f$.
\end{theorem}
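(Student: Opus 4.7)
Soundness is immediate: \cref{sec:prel,sec:axiomatisation} already argue that each of \eqref{eq:dist-lattice}--\eqref{eq:mu-nu-are-dual} holds in $\Lang$, with the duality principle \eqref{eq:mu-nu-are-dual} following by a routine transfinite induction on the closure ordinals of the two fixed points (equivalently, from the fact that $\Lang$ is completely distributive and $\mu,\nu$ there are computed iteratively). The content of the theorem is completeness, which, as advertised in the introduction, I plan to factor through a known complete axiomatisation of the linear-time $\mu$-calculus $\muLTL$.

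The first step is to define language-preserving translations $\ef{(-)}:\RLL\to\muLTL$ and $\fe{(-)}:\muLTL\to\RLL$, where $\muLTL$ is interpreted over $\omega$-words with one propositional letter $p_a$ per $a\in\Alphabet$ under the ``one-hot'' constraint $\bigvee_{a\in\Alphabet}(p_a\wedge\bigwedge_{b\neq a}\neg p_b)$ at every position. The key clause of $\ef{(-)}$ is $\ef{(ae)}\df p_a\wedge\nxt\ef e$, commuting otherwise with Boolean and fixed-point connectives and with variables; on the other side $\fe{p_a}\df a\top$, $\fe{\neg\phi}\df\c{\fe\phi}$, $\fe{\nxt\phi}\df\sum_{a\in\Alphabet}a\fe\phi$, and fixed points are handled by first rewriting to a guarded form in which every bound variable sits under a $\nxt$. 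A semantic check confirms $\Lang\models e = \fe{\ef e}$, and the dual identity $\phi\liff\ef{\fe\phi}$ is derivable in the chosen axiomatisation of $\muLTL$.

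The more substantial step is to argue that $\RLLLang$-derivability \emph{simulates} $\muLTL$-derivability along this translation. On the propositional side, the syntactic complement $\c{(-)}$ of \cref{sec:complement}, together with \eqref{eq:mu-nu-are-dual}, suffices to prove in $\RLLLang$ that $e+\c e=\top$ and $e\cap\c e=0$ for every closed $e$: the $+,\cap$ cases are pure distributive-lattice reasoning, $ae$ is \cref{ex:compl-of-action-in-lang}, and the $\mu,\nu$ cases follow by instantiating \eqref{eq:mu-nu-are-dual} with $f=\c e$. The quotient of closed $\RLL$ expressions by $\RLLLang$-provable equality is therefore a Boolean algebra equipped with least and greatest (pre/post)fixed points of definable monotone operators, via \eqref{eq:mu-is-least-prefix}--\eqref{eq:nu-is-greatest-postfix}. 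Given a $\muLTL$-proof of $\ef e\liff\ef f$, one simulates each inference step inside $\RLLLang$ and concludes via the provable identities $\RLLLang\proves e=\fe{\ef e}$ and $\RLLLang\proves f=\fe{\ef f}$.

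The main obstacle is managing the mismatch between $\muLTL$'s single modality $\nxt$ and the per-letter homomorphisms $a(-)$ of $\RLL$. Making $\fe{(-)}$ strict enough that $\muLTL$'s $\nxt$-rules translate to sound $\RLLLang$-inferences requires a ``letter case analysis'' lemma asserting --- provably in $\RLLLang$ --- that every closed $e$ equals $\sum_{a\in\Alphabet} a(e/a)$ for a suitable derivative $e/a$, together with the usual care around substitution into fixed-point variables to keep guardedness stable under unfolding. Once this bookkeeping is in place, the completeness of $\muLTL$ transfers cleanly to completeness of $\RLLLang$.
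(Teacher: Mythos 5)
Your overall architecture coincides with the paper's: soundness from the axiom-by-axiom commentary, completeness by translating into $\muLTL$, invoking Kaivola's completeness, simulating $\muLTL$ derivations inside $\RLLLang$ via a reverse translation $\fe{(-)}$, and closing the loop with a compatibility identity. But there are two genuine gaps. The first is your encoding of letters: you introduce one proposition $p_a$ per letter and relativise to a global one-hot constraint, whereas Kaivola's theorem (\cref{kaivola}) gives provability only of formulas valid over \emph{all} $\omega$-words over $\pow\props$, not validity relative to a constraint. To use it you would have to pass from $\lang e=\lang f$ to unconstrained validity of something like $C\limp(\ef e\liff\ef f)$, with $C$ the ``always one-hot'' formula, and then additionally show $\RLLLang\proves\fe C=\top$; none of this is addressed. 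The paper sidesteps the issue entirely by fixing $\Alphabet=\pow\props$, so that every $\omega$-word over $\pow\props$ is already a legitimate model and $\ef{(ae)}\df\bigwedge_{P\in a}P\land\bigwedge_{P\notin a}\bar P\land\nxt\ef e$ needs no side condition.

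The second gap is that you first assert only a ``semantic check'' of $\Lang\models e=\fe{\ef e}$ and later use it as the provable identity $\RLLLang\proves e=\fe{\ef e}$. Only the provable version is usable: a semantic equality is exactly what completeness is supposed to deliver, so relying on it at this point is circular. The paper establishes compatibility syntactically (\cref{ef-fe-compatible}) from the homomorphism and disjointness axioms. What genuinely must be done semantically is adequacy of $\ef{(-)}$, i.e.\ $\lang e\subseteq\lang{\ef e}$, which needs a real argument (the mixed substitution lemma, \cref{mixed-subst-lem}) that your ``language-preserving translations'' glosses over. Finally, the machinery you identify as the main obstacle --- guarded normal forms and a provable expansion $e=\sum_{a\in\Alphabet}a(e/a)$ --- is unnecessary: the $\nxt$-normality axioms and necessitation are simulated directly from \cref{eq:letter-lbdd-lattice-homo,eq:letters-freely-cogenerate-structure} (each $a$ is a lattice homomorphism, $ae\cap bf=0$ for $a\neq b$, and $\top=\sum_{a\in\Alphabet}a\top$), and neither translation needs guardedness since both simply commute with $\mu$ and $\nu$.
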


Let us point out that the soundness direction, $\impliedby$, follows from the commentary introducing each of the axioms \cref{eq:dist-lattice,eq:letter-lbdd-lattice-homo,eq:letters-freely-cogenerate-structure,eq:mu-is-least-prefix,eq:nu-is-greatest-postfix,eq:mu-nu-are-dual}. 
For the completeness direction, $\implies$, we shall reduce to the completeness result for the fixed point logic $\muLTL$.~\Cref{sec:completeness} is dedicated to proving this formally. Before that, let us establish some properties of $\RLLLang$.

\begin{proposition}[Functoriality]
\label{cor:functoriality}
$\RLLLang \proves f \leq g \implies e(f) \leq e(g)$.
\end{proposition}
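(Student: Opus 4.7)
The plan is to prove functoriality by induction on the structure of $e$, treating $e$ as a context with a distinguished free variable $X$ into which we substitute $f$ or $g$. More precisely, I will strengthen the statement to cover expressions with arbitrarily many free variables, proving that if $f_i \leq g_i$ for each $i$, then $e(f_1,\dots,f_n) \leq e(g_1,\dots,g_n)$; this strengthening is needed because the bound variable of a fixed-point operator stays free in the immediate subexpression, so the one-hole formulation is not preserved by the induction.

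The base cases are immediate: a variable context returns the substituent (so the hypothesis gives the result), and a variable distinct from the one being substituted gives the same expression on both sides, hence equal (and so $\leq$). For the case $e = a h$, the IH yields $h(f) \leq h(g)$, i.e.\ $h(f) + h(g) = h(g)$, and applying the homomorphism axioms \eqref{eq:letter-lbdd-lattice-homo} gives $ah(f) + ah(g) = a(h(f)+h(g)) = ah(g)$. For $e = h_1 + h_2$ and $e = h_1 \cap h_2$, the standard lattice manipulations from \eqref{eq:dist-lattice} propagate the two IH inequalities through.

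The main interest lies in the fixed-point cases, which are where the Induction and Coinduction principles from \eqref{eq:mu-is-least-prefix} and \eqref{eq:nu-is-greatest-postfix} really work. Consider $e = \mu Y\, h(X,Y)$ and suppose $f \leq g$. The Prefix axiom gives $h(g, \mu Y h(g,Y)) \leq \mu Y h(g,Y)$. Applying the IH to the context $h(-, \mu Y h(g,Y))$, one obtains $h(f, \mu Y h(g,Y)) \leq h(g, \mu Y h(g,Y))$, and chaining with the Prefix inequality yields $h(f,\mu Y h(g,Y)) \leq \mu Y h(g,Y)$. The Induction rule of \eqref{eq:mu-is-least-prefix} then delivers $\mu Y h(f,Y) \leq \mu Y h(g,Y)$. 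The case $e = \nu Y h(X,Y)$ is dual: from Postfix, $\nu Y h(f,Y) \leq h(f, \nu Y h(f,Y))$; by IH, $h(f, \nu Y h(f,Y)) \leq h(g, \nu Y h(f,Y))$; and then Coinduction from \eqref{eq:nu-is-greatest-postfix} gives $\nu Y h(f,Y) \leq \nu Y h(g,Y)$.

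The only subtle point — and arguably the main obstacle — is getting the bookkeeping right in the fixed-point cases: one must apply the IH with a fixed-point expression plugged into the remaining bound variable slot, which is why the multi-variable strengthening is needed to close the induction cleanly. No appeal to \eqref{eq:letters-freely-cogenerate-structure} or \eqref{eq:mu-nu-are-dual} is required, so this proposition is really a basic monotonicity fact that holds already in the weaker theory lacking complementation axioms.
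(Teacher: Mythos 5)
Your proof is correct and follows essentially the same route as the paper's: the same multi-variable strengthening of the induction hypothesis, the same use of the homomorphism and lattice axioms for the letter and $+/\cap$ cases, and the same Prefix--IH--Induction (resp. Postfix--IH--Coinduction) chain for the fixed-point cases. Your closing observation that only \cref{eq:dist-lattice,eq:letter-lbdd-lattice-homo,eq:mu-is-least-prefix,eq:nu-is-greatest-postfix} are needed is also accurate.
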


\begin{proof}
We will prove a stronger statement \viz for all $i$, $\Vec{f_i}\le \Vec{g_i}$ $\implies$ $e(\Vec{f_i})\le e(\Vec{g_i})$. We will prove by induction on $e(\Vec{X})$.
\begin{itemize}
    \item When $e=X$, what is to be proved is literally the hypothesis.
    \item Suppose $e=ae_0(\Vec{X})$. By induction hypothesis, $e_0(\Vec{f_i})\le e_0(\Vec{g_i})$ or, $e_0(\Vec{f_i})+e_0(\Vec{g_i})=e_0(\Vec{g_i})$. Therefore, by~\Cref{eq:letter-lbdd-lattice-homo}, $ae_0(\Vec{f_i})+ae_0(\Vec{g_i})=ae_0(\Vec{g_i})$, or $ae_0(\Vec{f_i})\le ae_0(\Vec{g_i})$.
    \item Suppose $e=e_0(\Vec{X})+e_1(\Vec{X})$. By induction hypothesis, $e_0(\Vec{f_i})\le e_0(\Vec{g_i})$ and $e_1(\Vec{f_i})\le e_1(\Vec{g_i})$. Similarly, as before, we can reason under inequalities by converting them into equalities. So, we have $e_0(\Vec{f_i})+e_1(\Vec{f_i})\le e_0(\Vec{g_i})+e_1(\Vec{g_i})$. Similarly for the case when $e=e_0\cap e_1$.
    \item Suppose $e=\mu X e_0(X,\Vec X)$. By induction hypothesis, $e_0(\mu Xe_0(X,\Vec{g_i}),\Vec{f_i})\le e_0(\mu Xe_0(X,\Vec{g_i}),\Vec{g_i})$. By prefix, $e_0(\mu Xe_0(X,\Vec{g_i}),\Vec{f_i})\le \mu Xe_0(X,\Vec{g_i})$. By induction, $\mu Xe_0(X,\Vec{f_i})\le \mu Xe_0(X,\Vec{g_i})$. When $e=\nu Xe_0(X,\Vec X)$ it is symmetric. \qedhere
\end{itemize}
\end{proof}

As an immediate corollary of functoriality, we have:

\begin{example}[Fixed points are fixed points]
\label{ex:postfix-mu}
By a standard argument mimicking the proof of the Knaster-Tarski theorem, $\RLLLang\proves\mu X e(X) \leq e (\mu X e(X))$ and dually, $\RLLLang\proves e(\nu X e(X)) \leq \nu X e(X)$. We will show the first one. By Induction it suffices to show that $e(\mu Xe(X)) $ is a prefixed point, i.e.\ $e(e(\mu Xe(X))) \leq e(\mu X e(X))$. Now, by the functors of \cref{cor:functoriality} above it suffices to show $e(\mu Xe(X))\leq \mu X e(X)$, which is just the Prefix axiom.
\end{example}

We will now show the provable correctness of the syntactic notion of complementation we introduced at the beginning of this section:

\begin{proposition}
    [Complementation]
    \label{prop:compl-is-compl}
    $\RLLLang$ proves the following, for all closed $e$:
    \begin{equation}
        \label{eq:c-is-complement}
        \begin{array}{r@{\ \leq \ }l}
             \top & e + \c e \\
             e \cap \c e & 0
        \end{array}
    \end{equation}
\end{proposition}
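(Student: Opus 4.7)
The plan is to proceed by induction on the structure of $e$. We first strengthen the statement to cover expressions with free variables, since the clause $\c X \df X$ in the definition of complementation forces us to treat free variables as placeholders for paired complementary elements. Specifically, we prove two parallel claims, each by its own induction on $e(X_1, \ldots, X_n)$, for arbitrary closed $\vec f, \vec g$:
\begin{itemize}
\item (A) if $\RLLLang \proves \top \leq f_i + g_i$ for each $i$, then $\RLLLang \proves \top \leq e(\vec f) + \c e(\vec g)$;
\item (B) if $\RLLLang \proves f_i \cap g_i \leq 0$ for each $i$, then $\RLLLang \proves e(\vec f) \cap \c e(\vec g) \leq 0$.
\end{itemize}
The proposition follows by specialising to closed $e$. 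Splitting the induction into these two claims (rather than a joint one) matches the structure of axiom \eqref{eq:mu-nu-are-dual}, which is itself given as two independent quasi-equational principles; each claim consumes exactly one of these in the $\mu/\nu$ step.

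The base case $e = X_i$ is immediate, since $\c X_i = X_i$ makes both claims reduce to the hypotheses on $f_i, g_i$. For $e = af$, Claim (A) follows by applying letter homomorphism \eqref{eq:letter-lbdd-lattice-homo} to the IH $\top \leq f(\vec f) + \c f(\vec g)$ to obtain $a\top \leq af(\vec f) + a\c f(\vec g)$, and then adding $\sum_{b \neq a} b\top$ on the right, which recovers $\top$ on the left via the partition axiom \eqref{eq:letters-freely-cogenerate-structure}. Claim (B) combines the letter homomorphism on $f(\vec f) \cap \c f(\vec g) \leq 0$ with the disjointness clause of \eqref{eq:letters-freely-cogenerate-structure} to eliminate the cross-terms $af(\vec f) \cap b\top$ with $a \neq b$. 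The cases $e = f + g$ and $e = f \cap g$ reduce to routine distributive lattice manipulations: for instance, for Claim (A) at $f + g$, one meets the two IHs $\top \leq f(\vec f) + \c f(\vec g)$ and $\top \leq g(\vec f) + \c g(\vec g)$ and distributes, recognising that every mixed conjunct is absorbed into either $f(\vec f) + g(\vec f)$ or $\c f(\vec g) \cap \c g(\vec g)$.

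The main obstacle is the fixed point cases, where the inductive hypothesis must be internalised as the premise of \eqref{eq:mu-nu-are-dual}. For $e = \mu X f(X, \vec Y)$ and Claim (A), observe $\c e = \nu X \c f(X, \vec Y)$, and apply the first part of \eqref{eq:mu-nu-are-dual}: it suffices to establish the implication $\top \leq U + V \implies \top \leq f(U, \vec f) + \c f(V, \vec g)$ for fresh $U, V$, which is exactly Claim (A) for the subexpression $f$, applied to the extended variable list under the hypotheses $\top \leq U + V$ and $\top \leq f_i + g_i$. Claim (B) for $\mu X f$ invokes the second part of \eqref{eq:mu-nu-are-dual} with the dual premise $U \cap V \leq 0$. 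The case $e = \nu X f$ is symmetric, using the same parts of \eqref{eq:mu-nu-are-dual} modulo commutativity of $+$ and $\cap$. The well-foundedness of the induction is preserved because each appeal of the IH is on the strict subexpression $f$, regardless of the added variable pair.
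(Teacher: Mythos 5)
Your proposal matches the paper's proof: the paper likewise strengthens the statement to a ``complement functoriality'' lemma over open expressions with paired complementary substitutions (\cref{lem:compl-functors}), proves it by induction on $e$ using the homomorphism and partition axioms for the $af$ case and \eqref{eq:mu-nu-are-dual} for the fixed point cases, and recovers the proposition by taking $\vec X,\vec Y$ empty. The only cosmetic difference is that the paper states the strengthened claim with the universal quantification over $\vec X,\vec Y$ internal to $\RLLLang$ (rather than ranging over closed instances), which is the form one in any case needs --- and which you implicitly use, via your fresh $U,V$ --- to discharge the quantified premise of \eqref{eq:mu-nu-are-dual}.
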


The result follows immediately from the following lemma more generally establishing `complement functoriality', by setting $\vec X$ and $\vec Y$ to be empty in:
\begin{lemma}
    \label{lem:compl-functors}
    $\RLLLang $ proves 
    \begin{equation}
        \label{eq:compl-functors}
        \begin{array}{l}
        \forall \vec X,\vec Y (\bigwedge_i \top \leq X_i + Y_i \implies \top \leq e(\vec X) + \c e (\vec Y)) \\
        \forall \vec X,\vec Y (\bigwedge_i X_i \cap Y_i \leq 0 \implies e(\vec X) \cap \c e (\vec Y) \leq 0)  
        \end{array}
    \end{equation}
\end{lemma}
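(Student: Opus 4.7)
The plan is to prove both implications in \eqref{eq:compl-functors} simultaneously by induction on the structure of $e$, threading the inductive hypothesis through both the `join-covers-top' and `meet-is-bottom' formulations. The two statements are genuinely interlocking in the cases for $+$ and $\cap$, and the new axiom \eqref{eq:mu-nu-are-dual} is tailored exactly to handle the fixed-point cases, so the induction should go through cleanly.

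For the base case $e = X_i$, we have $\c e = X_i$, and both implications are just the hypothesis on the $i$-th pair of variables. For $e = af$, the key observation is that the partition axioms \eqref{eq:letters-freely-cogenerate-structure} together with the homomorphism axioms \eqref{eq:letter-lbdd-lattice-homo} reduce everything to an application of the inductive hypothesis under $a$. Concretely, for the first implication, $\top = a\top + \sum_{b\neq a} b\top$, so it suffices to show $a\top \leq af(\vec X) + a\c f(\vec Y)$; by IH we have $\top \leq f(\vec X) + \c f(\vec Y)$, and applying \cref{cor:functoriality} under $a\cdot$ combined with $a(e+f) = ae + af$ closes it. For the second implication, I distribute $af(\vec X) \cap (a\c f(\vec Y) + \sum_{b\neq a} b\top)$, use $a(e\cap f) = ae\cap af$ to fold the $a\c f$ term back under $a$ (where IH gives $f(\vec X) \cap \c f(\vec Y) \leq 0$ and hence $a0 = 0$), while the cross terms $af(\vec X) \cap b\top$ vanish by \eqref{eq:letters-freely-cogenerate-structure}.

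For $e = f+g$ (and dually $e = f\cap g$), I use \eqref{eq:dist-lattice} and apply the two inductive hypotheses to the two summands separately. For the first implication, IH yields $\top \leq f(\vec X) + \c f(\vec Y)$ and $\top \leq g(\vec X) + \c g(\vec Y)$; weakening each gives $\top \leq (f+g)(\vec X) + \c f(\vec Y)$ and $\top \leq (f+g)(\vec X) + \c g(\vec Y)$; intersecting and distributing produces $\top \leq (f+g)(\vec X) + (\c f(\vec Y) \cap \c g(\vec Y))$, which is exactly what we want. The second implication is dual, distributing the meet over the sum and discarding each disjoint conjunct by IH.

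The fixed-point cases are where the heavy lifting is done by \eqref{eq:mu-nu-are-dual}, and I expect these to be the main (only interesting) obstacle. For $e = \mu X f(X, \vec X)$, we have $\c e = \nu X \c f(X, \vec Y)$, and the first half of \eqref{eq:mu-nu-are-dual} reduces the goal $\top \leq \mu X f(X,\vec X) + \nu X \c f(X,\vec Y)$ to showing
\[ \forall U,V\ (\top \leq U + V \implies \top \leq f(U,\vec X) + \c f(V, \vec Y)). \]
But this is exactly what the inductive hypothesis for $f$ yields, once we also feed in the hypothesis $\bigwedge_i \top \leq X_i + Y_i$ on the parameters. The second implication is symmetric, using the second half of \eqref{eq:mu-nu-are-dual}. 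The case $e = \nu X f(X, \vec X)$ is completely dual. In each fixed-point case one has to be slightly careful to match the variable binding: the complement of a $\mu$-expression binds a fresh $\nu$, and the IH must be instantiated with the tuple of variables extended by this new binder on both sides, which is the only mild bookkeeping required.
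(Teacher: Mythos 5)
Your proposal is correct and follows essentially the same route as the paper's own proof: a simultaneous induction on $e$ in which the variable and lattice cases are handled by the bounded distributive lattice axioms \eqref{eq:dist-lattice}, the $af$ case by \eqref{eq:letter-lbdd-lattice-homo} and \eqref{eq:letters-freely-cogenerate-structure} (as in \cref{ex:compl-of-action-in-lang}), and the fixed-point cases by feeding the inductive hypothesis, extended with the fresh bound pair, into the duality axiom \eqref{eq:mu-nu-are-dual}. No gaps to report.
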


\begin{proof}
[Proof sketch]
    By induction on $e(\cdot)$.
    When the outermost connective of $e$ is a $+$ or $\cap$ we appeal to the induction hypothesis by duality of $+$ and $\cap$ more generally in bounded distributive lattices.
    The case when $e$ has form $af$ is handled similarly to \cref{ex:compl-of-action-in-lang}, only with the presence of free variables.
    It remains to check the fixed point cases.

Suppose $e(\vec X)$ has form $\mu X f(X,\vec X)$. 
Reasoning in $\RLLLang$, suppose $\top \leq X_i + Y_i$ and $X_i \cap Y_i \leq 0$ for all $i$. 
We have:
\[
\begin{array}{rll}
     & \forall X,Y (\top \leq X+Y \implies \top \leq f(X,\vec X) + \c f(Y ,\vec Y)) & \text{by IH} \\
     \therefore & \top \leq \mu X f(X,\vec X) + \nu X \c f(X,\vec Y) & \text{by \eqref{eq:mu-nu-are-dual}}
\end{array}
\]
\[
\begin{array}{rll}
    & \forall X,Y (X\cap Y  \leq 0  \implies   f(X,\vec X) \cap \c f (Y, \vec Y) \leq 0 ) & \text{by IH} \\
    \therefore & \mu X f(X,\vec X) \cap \nu X \c f (X,\vec X) \leq 0 & \text{by \eqref{eq:mu-nu-are-dual}}
\end{array}
\]
The argument for the case when $e(\vec X)$ has form $\nu X f(X,\vec X)$ is symmetric.
\end{proof}

We end this section with some examples of models of $\RLLLang$.

In \cref{sec:complement} we defined a complement expression $\c e$  of each RLL expression $e$, and \cref{prop:compl-is-compl} showed that $e$ and $\c e$ are provable complementary in $\RLLLang$. This means that any model of $\RLLLang$ has a substructure, namely the denotations of RLL expressions, that forms a Boolean algebra. The same holds for Kleene Algebras, as each regular expression can also be associated with one computing its complement, with respect to the regular language model. Just like $\KA$, this does not mean that all models of $\RLLLang$ are Boolean algebras themselves.

\begin{example}[$\RLLLang$ model without general complements]
Fix the alphabet $\{0,1\}$. Consider the substructure $\mathcal K$ of $\Lang$ 
that is the smallest $\bigcup$-complete lattice containing every $\omega$-regular language and $Q:= (0,1)\cap \mathbb Q$. First, note that indeed $\mathcal K \models \RLLLang$:
\begin{itemize}
    \item \cref{eq:dist-lattice,eq:letter-lbdd-lattice-homo,eq:letters-freely-cogenerate-structure} hold as $\mathcal K \leq \mathcal L$.
\item For \eqref{eq:mu-is-least-prefix}, we define $(\mu X e(X))^\mathcal K := \bigcup\limits_{\alpha \in \mathsf{Ord}} e^\alpha(\emptyset)$.
This is well defined and coincides with $\lang{\mu X e(X)}$ by $\bigcup$-completeness and the approximant definition of the latter.
\item For \eqref{eq:nu-is-greatest-postfix}, we define $(\nu X e(X))^\mathcal K := \bigcup \{A \subseteq e(A)\} $.
Since, in particular, $\lang {\nu X e(X)}$ is a postfixed point and an $\omega$-regular language, it must coincide with $(\nu X e(X))^\mathcal K$.
\end{itemize}

However it is not hard to see that $Q$ does not have a complement in $\mathcal K$, i.e.\ that $(0,1)\setminus \mathbb Q$ does not belong to $\mathcal K$.
For this note that, as powerset lattices are completely distributive (and therefore so are their (semi)complete sublattices), we can write any element $A$ of $\mathcal K$ as an infinite union of finite intersections of $\omega$-regular languages and $Q$, i.e.\ of the form $\bigcup\limits_{i\in I} A_{i1} \cap \cdots \cap A_{in_i}$, where each $A_{ij}$ is $\omega$-regular or $Q$.
Now, if $A\neq \emptyset$, then also some $A_i := A_{i1}\cap \cdots \cap A_{in_i} \neq \emptyset$ as well. 
However, since $\omega$-regular languages are closed under intersection, $A_i$ must contain the rational part of some nonempty $\omega$-regular language.
Since any non-empty $\omega$-regular language must contain some ultimately periodic word, this means that $A\cap \mathbb Q \supseteq A_i \cap \mathbb Q \neq \emptyset$, and so $A $ cannot be a complement of $Q$ in $\mathcal K$.    
\end{example}

\begin{example}[Minmax as a model of $\RLLLang$]
\label{ex:minmax-as-rll}

Note that $[0,1]$ with $0:=0$, $\top:=1$, $+:=\max$, and $\cap:=\min$ is a bounded distributive lattice. Let $\Alphabet=\{\id\}$ and define $\id\cdot:x\mapsto x$. It is easy to check that \Cref{eq:letter-lbdd-lattice-homo,eq:letters-freely-cogenerate-structure} are satisfied. Define
\[\mu Xe:= \inf\{x\mid e(x)\le x\}\qquad \nu Xe :=\sup\{x\mid x\le e(x)\}\]
Since $[0,1]$ is compact, $\mu e$ and $\nu e$ exist for any $e$. To prove~\Cref{eq:mu-is-least-prefix,eq:nu-is-greatest-postfix,eq:mu-nu-are-dual}, first note that any function $e:[0,1]^n\to[0,1]$ composed of $\max$, $\min$, and $\id$ is non-decreasing. Let $f,g:[0,1]\to[0,1]$ be non-decreasing functions. Define   $\alpha:=\inf\{x\mid f(x)\le x\}$ and $\beta:=\sup\{x\mid x\le g(x)\}$.

\subparagraph{Prefix.} Suppose $\alpha<f(\alpha)$. Then, there exists $\alpha \le y < f(\alpha)$ such that $f(y)\le y$. Since $f$ is non-decreasing, $f(\alpha)\le f(y)\le y < f(\alpha)$. Contradiction! 

\textbf{Induction.} Need to show that if $f(x)\le x$, then $\alpha\le x$ -- this holds by definition of $\inf$.  

\textbf{Duality.} Suppose $f,g$ are such that whenever $\max(x,y)\ge 1$, we have $\max(f(x),g(y))\ge 1$. We need to show that $\max(f(\alpha),g(\beta))\ge 1$. Since $\max(x,1)\ge 1$ for all $x$, $\max(f(x),g(1))\ge 1$. So, either $f(x)=1$ for all $x$ or $g(1)=1$. In the first case, $f(\alpha)=1$ and hence we are done. In the second case, we have $1\in\{x\mid x\le g(x)\}$. So, $\beta=1$. Therefore, $g(\beta)=g(1)=1$ and we are done. 
%
%
Postfix, Coinduction, and the other case of Duality are symmetric arguments. Note that $[0,1]$ can be replaced by any compact subset of $\mathbb{R}$.

\end{example}

Note that \Cref{eq:letters-freely-cogenerate-structure} is the only axiom that is bespoke to the $\Lang$ interpretation. In fact, we can easily modify~\Cref{ex:minmax-as-rll} to be a model of~\Cref{eq:dist-lattice,eq:letter-lbdd-lattice-homo,eq:mu-is-least-prefix,eq:nu-is-greatest-postfix,eq:mu-nu-are-dual}. Let us call $\RLL$ the theory axiomatised by these equations.

\begin{example}[Minmax as a model of $\RLL$]

Let $\Alphabet=\{a_1,\cdots,a_n\}$ where $a_i\in(0,1)$ for all $i$. Let $a_i\cdot:x\mapsto a_i x$. As before we work with the bounded distributive lattice $[0,1]$ with $0:=0$, $\top:=1$, $+:=\max$, and $\cap:=\min$ ans we have~\Cref{eq:dist-lattice,eq:letter-lbdd-lattice-homo}. Any function $e:[0,1]^n\to[0,1]$ composed of $\max$, $\min$, and $a_i\cdot$ is non-decreasing. Therefore,~\Cref{eq:mu-is-least-prefix,eq:nu-is-greatest-postfix,eq:mu-nu-are-dual} is satisfied. 

However, \Cref{eq:letters-freely-cogenerate-structure} does not hold. Let $e\ne 0$ and $f\ne 0$. Then $\min(a_ie,a_jf)\ne 0$. Similarly, $\max(\{a_i\}_{i=1}^n)\ne 1$. Therefore, this is a model of $\RLL$ and \emph{not} of $\RLLLang$.      
\end{example}

\section{Completeness via \texorpdfstring{$\muLTL$}{}}
\label{sec:completeness}

In this section, we will prove the completeness of $\RLLLang$. Our completeness proof relies on the completeness of an axiomatisation of the linear-time $\mu$-calculus called $\muLTL$. 
We show several syntactic and semantic simulations between $\RLLLang$ and $\muLTL$. For the sake of brevity, we only give the directions necessary to recover completeness of $\RLLLang$ wrt.\ $\Lang$.

\subsection{A (very quick) recap of \texorpdfstring{$\muLTL$}{}}

Linear temporal logic (LTL) is a modal logic with modalities referring to time. In LTL, one can encode formulas about the future of \emph{paths}. In particular, we have formulas of the form $\nxt \phi$ and $\phi \mathbf U \psi$ that are (informally) interpreted as `at the next timestamp $\phi$ holds' and `$\phi$ holds until $\psi$ holds.' Naturally, they are interpreted over linear Kripke structures (\ie the accessibility relation is successor on $\Nat$). Note that $\phi \mathbf U \psi$ can be construed as a fixed point operator $\nu X (\psi \vee (\phi\wedge\nxt X))$. $\muLTL$ is the generalisation of LTL with arbitrary fixed points.   

$\muLTL$ formulas, written $\phi,\psi,\dots$, are generated by:
\[
\phi,\psi,\dots
\quad ::= \quad
\bot \ \mid \ \top \ \mid \ P \ \mid \ \bar P \  \mid \ X \  \mid \  \phi \lor \psi \  \mid \  \phi \land \psi \ \mid \ 
\nxt \phi
\  \mid \  \mu X \phi \  \mid \ \nu X \phi
\]

Readers familiar with the modal $\mu$-calculus might think of $\muLTL$ as a fragment of $\mu$-calculus with a self-dual modality. We will define the semantics over the canonical linear Kripke structures \viz $\omega$. We shall assume that the propositional letters $P,Q,\dots$ are from some finite set $\props$. It is pertinent now to fix an alphabet $\Alphabet = \pow \props$.

\begin{definition}[Semantics of $\muLTL$]
Let us temporarily expand the syntax of formulas by a constant symbol $\alpha$ for each subset $\alpha\subseteq \omega$. For $\omega$-words $\sigma \in \Alphabet^\omega $ ( i.e.\  $\sigma \in \pow \props^\omega$) and formulas $\phi$, we define $\interp \sigma \phi \subseteq \omega $ by:
\begin{align*}
&\interp \sigma \bot \df \emptyset    && \interp \sigma \top \df \omega\\
&\interp \sigma P \df \{ n\in \omega : P \in \sigma_n\} && \interp \sigma {\bar P} \df \{ n\in \omega : P \notin \sigma_n\} \\
&\interp \sigma \alpha \df \alpha\\
&\interp \sigma {(\phi \land \psi)} \df \interp \sigma \phi \cap \interp \sigma \psi &&\interp \sigma {(\phi \land \psi)} \df \interp \sigma \phi \cap \interp \sigma \psi\\ 
&\interp {\sigma} {(\nxt \phi)} \df \{ n \in \omega : n+1 \in \interp \sigma \phi\}\\
&\interp \sigma {(\mu X \phi(X))} \df \bigcap \{ A \supseteq \interp \sigma {\phi(A)} \} && \interp \sigma {(\nu X \phi(X))} \df \bigcup \{ A \subseteq \interp \sigma {\phi(A)} \}
\end{align*}
%
Write $\sigma \models \phi$ if $0 \in \interp \sigma \phi$. We say $\phi$ is \defname{valid}, written $ \models \phi$, if for all $\sigma \in \Alphabet^\omega$ we have $\sigma \models \phi$.
\end{definition}


$\muLTL$ enjoys a sound and complete axiomatisation~\cite{Kaivola95,Doumane17}.
To recast this axiomatisation in the current logical basis, let us point out that we can extend negation to all $\muLTL$ formulas by defining $\bar \phi$ exploiting De Morgan duality of $\bot,\top$ and $\lor,\land$ and $\mu,\nu$, and finally self-duality of $\nxt$: $\overline{\nxt \phi} \df \nxt \bar \phi$.
Therefore, we may freely use other propositional connectives such as $\lnot,\limp,\liff$ as suitable macros. The following axiomatisation is equivalent to that of~\cite{Kaivola95}, only adapted to our negation normal syntax.

\begin{figure}[t]
    \centering
\begin{tabular}{l|cc}
\multirow{3}{*}{\textbf{Axioms}} & \multicolumn{2}{l}{All propositional tautologies}\\[5pt]  
& $\nxt (\phi \lor \psi) \liff \nxt \phi \lor \nxt \psi$ & $\nxt (\phi \land \psi)\liff \nxt \phi \land \nxt \psi$ \\[5pt]
& $\phi(\mu X \phi(X)) \limp \mu X\phi(X)$ & $\nu X\phi(X) \limp \phi(\nu X\phi(X))$\\[5pt]\hline\\
\multirow{2}{*}{\textbf{Rules}} & $\vliinf{MP}{}{\psi}{\phi}{\phi \limp \psi}$ & $\vlinf{\nxt}{}{\nxt \phi}{\phi}$\\[10pt]
& $\vlinf{\mu}{}{\mu X \phi(X) \limp \psi}{\phi(\psi) \limp \psi}$ & $\vlinf{\nu}{}{\psi \limp \nu X \phi(X)}{\psi \limp \phi(\psi)}$
\end{tabular}

    \caption{A Hilbert-style axiomatisation of $\muLTL$}
    \label{fig:muLTL-axiomatisation}
\end{figure}

\begin{definition}[Hilbert-style axiomatisation of $\muLTL$] $\muLTL$\footnote{By abuse of notation, we refer to both the language and the axiomatisation as $\muLTL$.} is defined as the set of instances of the axioms closed under the inference rules in~\cref{fig:muLTL-axiomatisation}. 
\end{definition}

\begin{example}
Recall that $\phi\U\phi:=\nu X (\psi \vee (\phi\wedge\nxt X))$. We will prove the LTL tautology $\nxt(\phi\U\psi)\limp\nxt\phi\U\nxt\psi$. First note that the following modal rule is derivable
\[\vlinf{(\star)}{}{\nxt\phi\limp\nxt\psi}{\phi\limp\psi}\]
Thus we have,
\begin{align*}
\nxt(\phi\U\psi)&\limp\nxt(\psi\vee(\phi\wedge\nxt(\phi\U\psi))) &&\text{by }(\star)\text{ and }\nu\text{-unfolding}\\
                &\limp\nxt\psi\vee\nxt(\phi\wedge\nxt(\phi\U\psi)) &&\text{by normality of}\nxt\text{ over }\vee\\
                &\limp\nxt\psi\vee(\nxt\phi\wedge\nxt\nxt(\phi\U\psi)) &&\text{by normality of}\nxt\text{ over }\wedge
\end{align*}
Applying the $\nu$ rule, we are done.
\end{example}

\begin{theorem}[\cite{Kaivola95}]
\label{kaivola}
$\muLTL$ is sound and complete \ie  $\muLTL \proves \phi$ $\iff$  $\models \phi$.
\end{theorem}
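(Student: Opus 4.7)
Since this is a classical result from \cite{Kaivola95}, my plan is to sketch the standard route rather than attempt a novel argument. Soundness ($\implies$) is a routine induction on proof length: each axiom is valid in the standard Kripke semantics over $\omega$ (the $\nxt$-distributivity axioms follow directly from the definition $\interp \sigma {\nxt \phi} = \{n : n+1 \in \interp \sigma \phi\}$, and the fixed-point unfolding axioms follow from Knaster--Tarski), and the inference rules preserve validity, with the $\mu$ and $\nu$ rules justified by the least/greatest (post)fixed point characterisations and the $\nxt$ rule by self-duality of successor on $\omega$.

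For completeness ($\impliedby$), the approach I would follow is Kaivola's two-step strategy. First, show that every $\muLTL$ formula is provably equivalent to one in a \emph{guarded normal form}, in which every fixed-point binder occurs under a $\nxt$. This is achieved by B\'ekic-style manipulations of the $\mu$/$\nu$ axioms combined with $\nxt$-distributivity, and ensures that the formula induces a well-defined alternating parity automaton on $\omega$-words. Second, translate guarded formulas into $\omega$-regular expressions (or automata) over a Wilke-algebra style presentation, and import the completeness of an equational axiomatisation at that level back into the $\muLTL$ proof system.

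The main technical obstacle is this second phase: two guarded formulas that denote the same language must be shown mutually derivable purely from the Hilbert axioms. This requires simulating automaton complementation and minimisation syntactically, and correctly tracking the priorities of nested fixed points through the translation. An alternative, more modern route due to Doumane \cite{Doumane17} is to build a cyclic proof system for $\muLTL$, establish its completeness via the automata-theoretic decidability of $\muLTL$-validity, and then convert cyclic derivations into finite Hilbert proofs by an internalisation argument. For the purposes of the present paper either completeness result will do, since \cref{kaivola} is invoked only as a black box in the completeness proof for $\RLLLang$ that follows.
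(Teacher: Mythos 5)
The paper does not prove this theorem at all: it is imported as an external result, cited from Kaivola (with Doumane's cyclic-proof completeness also referenced), modulo the remark that the axiomatisation in \cref{fig:muLTL-axiomatisation} is Kaivola's system recast in negation normal form. Your treatment --- a routine soundness induction plus a deferral to the literature for completeness, noting that the theorem is only ever used as a black box downstream --- is consistent with how the paper uses it, so there is nothing to object to.
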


\subsection{Interpreting \texorpdfstring{$\RLLLang$}{} in \texorpdfstring{$\muLTL$}{} and vice versa}
Our aim is to reduce the completeness of $\RLLLang$ to that of $\muLTL$.
For this reason we need to embed $\RLLLang$  into $\muLTL$.

\begin{definition}
    For (possibly open) RLL expressions $e$ we define a $\muLTL$ formula $\ef e $ by induction on the structure of $e$ as follows:
    \begin{itemize}
        \item $\ef X \df X$
        \item $\ef {(ae)} \df \bigwedge\limits_{P\in a} P \land \bigwedge\limits_{P\notin a} \bar P\land \nxt \ef e$
        \item $\ef{e+f} \df \ef e \lor \ef f$
        \item $\ef {e\cap f} \df \ef e \land \ef f $
        \item $\ef {(\mu X e)} \df \mu X \ef e$
        \item $\ef {(\nu X e )} \df \nu X \ef e $
    \end{itemize}
\end{definition}

We need to show that the translation above is faithful wrt.\ the two semantics we have presented, for RLL expressions and for $\muLTL$ formulas. Writing $\lang \phi \df \{\sigma \models \phi\}$ for closed $\muLTL$ formulas $\phi$, we have:

\begin{proposition}[Semantic adequacy]
\label{adequacy}
$\lang e \subseteq \lang{\ef e}$, for closed expressions $e$.
\end{proposition}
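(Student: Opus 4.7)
The plan is to strengthen the claim to accommodate open expressions, since the fixed-point cases of the induction will need to unfold binders and hence reason about subexpressions with free variables. Fix a word $\sigma \in \Alphabet^\omega$ and, for each language $A \subseteq \Alphabet^\omega$, set $\pi_\sigma(A) := \{n \in \omega : \sigma_{\geq n} \in A\}$, the set of positions of $\sigma$ whose suffixes lie in $A$. Given an assignment $\rho$ sending the free variables of $e$ to languages, let $\widehat\rho$ denote the $\muLTL$-valuation $X \mapsto \pi_\sigma(\rho(X))$. Writing $\lang{e}^\rho$ for the RLL interpretation of $e$ under $\rho$ (formally, first substituting each free $X$ by the constant $\rho(X)$) and similarly $\interp{\sigma}{\ef e}^{\widehat\rho}$ for the $\muLTL$ interpretation, the strengthened claim, proved by induction on $e$, is
$$\pi_\sigma\bigl(\lang{e}^\rho\bigr) \ \subseteq \ \interp{\sigma}{\ef e}^{\widehat\rho}.$$
The original proposition is then recovered by taking $e$ closed, $\rho$ empty, and evaluating membership of $\sigma$ at position $0$.

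The routine cases are straightforward: the variable case holds by construction of $\widehat\rho$; the $+$ and $\cap$ cases follow because $\pi_\sigma$ commutes with unions and intersections; and for $e = af$ the definition of $\ef{(af)}$ unfolds so that $\sigma_{\geq n}\in \lang{af}^\rho$ forces $\sigma_n = a$ (matching the propositional conjunction in $\ef{(af)}$, since $n \in \interp{\sigma}{P}$ iff $P \in \sigma_n$) together with $\sigma_{\geq n+1}\in \lang{f}^\rho$, at which point the outer IH supplies $n+1 \in \interp{\sigma}{\ef f}^{\widehat\rho}$ and hence $n \in \interp{\sigma}{\nxt \ef f}^{\widehat\rho}$.

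The main obstacle is the fixed-point cases, which require a careful bridge between Knaster--Tarski fixed points on $\pow(\Alphabet^\omega)$ and those on $\pow(\omega)$. For $\mu X e$ I would proceed by transfinite induction along ordinal approximants: writing $A_\alpha$ for the iterates of $B \mapsto \lang{e}^{\rho[X := B]}$ starting at $\emptyset$, and $\beta_\alpha$ for the corresponding iterates of the $\muLTL$-side operator, I would show $\pi_\sigma(A_\alpha) \subseteq \beta_\alpha$ at every stage, using the outer IH at the successor step and preservation of $\subseteq$ by $\bigcup$ at limits. For $\nu X e$ I would argue coinductively: setting $B := \lang{\nu X e}^\rho$, which satisfies $B \subseteq \lang{e}^{\rho[X := B]}$ by the Postfix property, applying $\pi_\sigma$ and then the outer IH at $\rho[X := B]$ yields
$$\pi_\sigma(B) \ \subseteq \ \pi_\sigma\bigl(\lang{e}^{\rho[X := B]}\bigr) \ \subseteq \ \interp{\sigma}{\ef e}^{\widehat\rho[X := \pi_\sigma(B)]},$$
which exhibits $\pi_\sigma(B)$ as a postfixed point of the $\muLTL$-side operator and hence, by Knaster--Tarski, as a subset of $\interp{\sigma}{\nu X \ef e}^{\widehat\rho}$. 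The real work is only in setting up the bookkeeping to align languages and sets of positions via $\pi_\sigma$; once that is fixed, both fixed-point cases reduce to routine monotone iteration arguments.
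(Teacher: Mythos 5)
Your proof is correct and is essentially the paper's argument in a different packaging. Your map $\pi_\sigma$ is exactly the paper's extension of $\interp{\sigma}{-}$ to language constants, $\interp{\sigma}{A} = \{n\in\omega : \sigma^n \in A\}$, and your $\nu$-case coincides with the paper's: exhibit the image of $\lang{\nu X e}$ as a postfixed point of the $\muLTL$-side operator and conclude by coinduction. The organisational differences are two. First, the paper states the induction for closed expressions of a syntax enriched with constants and substitutes the \emph{language} $\lang{\ef{(\mu X f(X))}}$ for the bound variable; it must then prove a separate ``mixed substitution'' lemma (\cref{mixed-subst-lem}) comparing $\interp{\sigma}{\phi(\lang{\chi})}$ with $\interp{\sigma}{\phi(\chi)}$ before the fixed-point cases go through. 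Your valuation-based strengthening never substitutes a formula for a variable, so that lemma is absorbed into monotonicity of $\interp{\sigma}{\ef{e}}$ in the valuation --- a fact you do rely on (at the successor step, to pass from $\widehat\rho[X:=\pi_\sigma(A_\alpha)]$ to $\widehat\rho[X:=\beta_\alpha]$) and should state explicitly. Second, for $\mu$ you run a transfinite induction on approximants, whereas the paper stays with the induction principle, using \cref{mixed-subst-lem} to show that the $\muLTL$-side least fixed point yields a prefixed point on the RLL side; both are sound --- yours needs ordinals and commutation of $\pi_\sigma$ with arbitrary unions, the paper's needs the extra lemma. Net effect: same bridge between the two semantics, same fixed-point reasoning, slightly different bookkeeping.
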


To prove this, we must first address the fact that our two semantics interpret syntax as different types of sets, and duly have different types of constant symbols. 
To this end, let us temporarily introduce into the language of $\muLTL$ a constant symbol $A$ for each language $A\subseteq \Alphabet^\omega$.
We extend the definition of $\ef -$ by the clause $\ef A \df A$ and duly extend the definition of $\interp \sigma -$ by the clause $\interp \sigma A \df \{n\in \omega : \sigma^n \in A\}$
where $\sigma^n$ is the $n$\textsuperscript{th} tail of $\sigma$, i.e.\ we set $\sigma^0 \df \sigma$, and $\sigma^{n+1}$ to be the tail of $\sigma^n$.
%
%
Now we can establish a sort of substitution lemma that relates our two semantics:

\begin{lemma}[Mixed substitution]
\label{mixed-subst-lem}
    $\interp \sigma {\phi (\lang{ \chi})} \subseteq \interp \sigma {\phi( \chi)}$.
\end{lemma}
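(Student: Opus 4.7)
The plan is to prove the inclusion by induction on the structure of $\phi$. To get the induction through the fixed-point binders cleanly, I would first generalise the statement so that $\phi$ may have arbitrary free variables (each interpreted as a subset of $\omega$ via an environment); this way the inductive hypothesis can be invoked on a subformula $\psi$ of $\phi$ even after a variable $Y$ of $\psi$ has been bound in $\phi$. The Boolean constants and propositional letters are unaffected by substitution, so those cases are immediate.

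The base case of interest is when $\phi$ is the very variable being substituted, where the inclusion reduces to comparing $\interp\sigma{\lang\chi}$ with $\interp\sigma{\chi}$. Unfolding gives $\interp\sigma{\lang\chi} = \{n : \sigma^n \in \lang\chi\} = \{n : 0 \in \interp{\sigma^n}\chi\}$, which reduces the base case to the standard shift-invariance of $\muLTL$: $0 \in \interp{\sigma^n}\chi \iff n \in \interp\sigma\chi$ for every closed $\chi$. This is established by a routine sub-induction on $\chi$, with the $\nxt$ clause handling the shift by one step. The $\lor,\land,\nxt$ cases of the main induction follow directly from the IH and the monotonicity of these operations; for instance, $\interp\sigma{\nxt\psi(\lang\chi)} = \{n : n+1 \in \interp\sigma{\psi(\lang\chi)}\} \subseteq \{n : n+1 \in \interp\sigma{\psi(\chi)}\} = \interp\sigma{\nxt\psi(\chi)}$ by the IH on $\psi$.

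The main obstacle is the fixed-point cases. For $\phi = \mu Y\, \psi(Y)$, I would appeal to the Knaster--Tarski characterisation: write $B := \interp\sigma{\mu Y\, \psi(Y,\chi)}$ and show that $B$ is a prefixed point of the operator $A \mapsto \interp\sigma{\psi(A,\lang\chi)}$. Since $B$ is the LFP of $A \mapsto \interp\sigma{\psi(A,\chi)}$, we have $\interp\sigma{\psi(B,\chi)} \subseteq B$; and the generalised IH applied to $\psi$, with $Y$ valued at $B$, gives $\interp\sigma{\psi(B,\lang\chi)} \subseteq \interp\sigma{\psi(B,\chi)}$. Combining these yields $\interp\sigma{\psi(B,\lang\chi)} \subseteq B$, whence $\mu$-induction gives $\interp\sigma{\mu Y\, \psi(Y,\lang\chi)} \subseteq B$ as desired. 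The case $\phi = \nu Y\, \psi(Y)$ is symmetric: every postfixed point of $A \mapsto \interp\sigma{\psi(A,\lang\chi)}$ is, by the IH, also a postfixed point of $A \mapsto \interp\sigma{\psi(A,\chi)}$, hence bounded above by $\interp\sigma{\nu Y\, \psi(Y,\chi)}$, and taking the union of all such sets delivers the inclusion. The trickiest part will be bookkeeping the generalised statement through substitutions and binders correctly; once that setup is in place, the inductive steps are textbook applications of the LFP/GFP induction principles.
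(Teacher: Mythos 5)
Your proposal is correct and follows essentially the same route as the paper: induction on $\phi$ (the paper uses size, you use structure with an environment for free variables, which is equivalent bookkeeping), with the variable case reducing to shift-invariance of the $\muLTL$ semantics and the fixed-point cases handled by showing that the relevant (pre/post)fixed point of one operator is, via the inductive hypothesis, a (pre/post)fixed point of the other, then invoking $\mu$-induction and $\nu$-coinduction exactly as the paper does.
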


\begin{proof}
By Induction on the size of $\phi(X)$, i.e.\ its number of symbols. 
\begin{itemize}
        \item If $\phi(X)$ is a variable $X$ then:
        \[
        \begin{array}{r@{\ \implies \ }ll}
             n \in \interp \sigma{ \lang {\chi}} & \sigma^n \in \lang{\chi} & \text{by definition of $\interp \sigma -$} \\
                & \sigma^n \models \chi & \text{by definition of $\lang - $} \\
                & 0 \in \interp{\sigma_n} {\chi} & \text{by definition of $\models$} \\
                & n \in \interp \sigma \chi & \text{by properties of $\interp \sigma -$}
        \end{array}
        \]
        \item The cases when $\phi(X)$ is an atomic formula (that is not $X$), a disjunction or conjunction are routine.
        \item If $\phi(X)$ is $\nxt \psi(X)$ then:
        \[
        \begin{array}{rr@{\ \implies \ }ll}
             & \forall n \  [\  n \in \interp \sigma {\psi (\lang \chi)} & n \in \interp \sigma {\psi(\chi)} \ ] & \text{by Induction hypothesis} \\
             \therefore & n+1 \in \interp \sigma {\psi (\lang \chi)} & n+1 \in \interp \sigma {\psi(\chi)} & \text{by $\forall$ instantiation} \\
             \therefore & n \in \interp \sigma {(\nxt \psi(\lang \chi))} & n \in \interp \sigma {(\nxt \psi(\chi))} & \text{by definition of $\interp \sigma -$}
        \end{array}
        \]
        \item If $\phi( X)$ is $\mu Y \psi(X, Y)$ then: 
        \[
        \begin{array}{rr@{\ \subseteq \ }ll}
             & \interp \sigma {\psi(\lang \chi, \interp \sigma {(\mu Y \psi (\chi,Y))})} &  \interp \sigma {\psi (\chi,\interp \sigma {(\mu Y \psi (\chi,Y))}) } & \text{by Induction hypothesis} \\
            &   & \interp \sigma {\psi (\chi, {\mu Y \psi (\chi,Y)}) } & \text{by substitution property of $\interp \sigma -$} \\
            & & \interp \sigma {(\mu Y \psi (\chi,Y))} & \text{since $\interp \sigma \mu$ is a prefixed point} \\
            \therefore & \interp \sigma {(\mu Y \psi (\lang \chi, Y))} & \interp \sigma {(\mu Y \psi (\chi, Y)} & \text{by $\interp\sigma \mu$-induction}
        \end{array}
        \]
        \item If $\phi(X)$ is $\nu Y \psi (X,Y)$ then:
        \[
        \begin{array}{rr@{\ \subseteq \ }ll}
             & \interp \sigma {\psi (\lang \chi, \interp \sigma {(\nu Y \psi (\lang \chi, Y))})} & \interp \sigma {\psi (\chi, \interp \sigma {(\nu Y \psi (\lang \chi, Y))})} & \text{by Induction hypothesis} \\
             & \interp \sigma {\psi (\lang \chi,  {\nu Y \psi (\lang \chi, Y)})} &  & \text{by substitution property of $\interp \sigma -$} \\
             & \interp \sigma {(\nu Y \psi (\lang \chi, Y))} & & \text{since $\interp \sigma \nu$ is a postfixed point} \\
        \therefore & \interp \sigma {(\nu Y \psi (\lang \chi, Y))} & \interp \sigma {(\nu Y \psi (\chi , Y)} & \text{by $\interp \sigma \nu$-coinduction}

        \end{array}
        \]
    \end{itemize}
\end{proof}

Now, semantic adequacy is readily proved:

\begin{proof}[Proof of~\Cref{adequacy}]
We proceed by induction on the size of $e$.
\begin{itemize}
    \item If $e$ is a constant symbol $A \subseteq \Alphabet^\omega$, then:
    \[
    \begin{array}{r@{\ \implies \ }ll}
         \sigma \in A & \sigma \in \ef A &\text{by definition of $\ef -$}  \\
            & \sigma^0 \in \ef A & \text{by definition of $-^n$} \\
            & 0 \in \interp \sigma {{\ef A}} & \text{by definition of $\interp \sigma -$} \\
            & \sigma \in \lang {\ef A} & \text{by definition of $\lang -$} 
    \end{array}
    \]
    \item If $e$ is $af$ then:
        \[
        \begin{array}{r@{\ \implies \ }ll}
             a\sigma \in \lang {af} & \sigma \in \lang{f} & \text{by definition of $\lang \cdot$} \\
                & \sigma \models \ef {f } & \text{by Induction hypothesis} \\
                & a\sigma  \models \nxt \ef {f } & \text{by definition of $\models$} \\
                & a\sigma  \models \bigwedge\limits_{P\in a}P \land \bigwedge\limits_{P\notin a} \bar P  \land \nxt \ef {f} & \text{by definition of $\models$} \\
                & a\sigma \models \ef{(af)} & \text{by definition of $\ef - $ and $\models$}
        \end{array}
        \]
        \item The cases when $e$ is a $+$ or $\cap$ expression are routine.
        \item If $e$ is $\mu Xf(X)$ then:
        \[
        \begin{array}{rr@{\ \subseteq \ }ll}
             & \lang {f(\lang {\ef{(\mu X f(X))}})} & \lang {\ef {f(\lang{\ef{(\mu X f(X))}} } } & \text{by Induction hypothesis} \\
            \therefore & \lang {f(\lang {{\mu X \ef f(X)}})} &   \lang {\ef f(\lang{{\mu X \ef f(X)} } } & \text{by definition of $\ef -$} \\
            & &  \lang{\ef f (\mu X \ef f(X))} & \text{by \cref{mixed-subst-lem}} \\
            & & \lang{\mu X \ef f(X)} & \text{since $\lang\mu$ is a prefixed point} \\
        \therefore & \lang{\mu X f(X)} & \lang{\mu X \ef f(X)} & \text{by $\lang \mu$-induction}
        \end{array}
        \]
        \item If $e$ is $\nu Xf(X)$ then:
        \[
        \begin{array}{rr@{\ \subseteq \ }ll}
             & \lang {f(\lang {\nu Xf(X)})} & \lang {\ef {f (\lang {\nu Xf(X)})}} & \text{by Induction hypothesis} \\
             & & \lang {\ef f (\lang {\nu Xf(X)})} & \text{by definition of $\ef - $}  \\
            & \lang {\nu X f(X)} & & \text{since $\lang \nu$ is a postfixed point} \\
         \therefore & \interp \sigma {\lang {\nu X f(X)}} & \interp \sigma {\lang {\ef f (\lang {\nu X f(X)})}}  & \text{by monotonicity property of $\interp \sigma -$} \\
            & &  \interp \sigma {\ef f (\lang {\nu X f(X)})} & \text{by \cref{mixed-subst-lem}} \\
            & & \interp \sigma {\ef f (\interp \sigma {\lang {\nu X f(X)}})} & \text{by substitution property of $\interp \sigma -$}\\
        \therefore & \interp \sigma {\lang {\nu X f(X)}}  & \interp \sigma {(\nu X \ef f (X))} & \text{by $\interp \sigma \nu$-coinduction} \\
            & & \interp \sigma {{\ef {(\nu X f(X))}}} & \text{by definition of $\ef -$}
        \end{array}
        \]
        So in particular,  $\sigma \in \lang {\nu Xf(X)}\implies 0\in \interp \sigma {\lang{\nu Xf(X)}} \implies 0 \in \interp \sigma {{\ef {(\nu X f(X))}}} \implies \sigma \models  {{\ef {(\nu X f(X))}}} \implies \sigma \in \lang {\nu X f(X)}$. \qedhere
\end{itemize}
\end{proof}

In order to leverage the completeness of $\muLTL$ within $\RLLLang$, we need to simulate its reasoning, for which we must embed $\muLTL$ back into $\RLLLang$.

\begin{definition}
    For $\muLTL$ formulas $\phi$ we define an RLL expression $\fe \phi$ by induction on the structure of $\phi$ as follows:
\begin{align*}
    &\fe \bot \df 0 && \fe \top \df \top\\
    &\fe P \df \sum\limits_{a \ni P } a\top && \fe {\bar P} \df \sum\limits_{a \not \ni P} a\top\\
    &\fe X \df X\\
    &\fe {(\phi\lor \psi)} \df \fe \phi + \fe \psi && \fe {(\phi \land \psi)} \df \fe \phi \cap \fe \psi\\
    &\fe {(\nxt \phi)} \df \sum\limits_{a \in \Alphabet} a\fe \phi\\
    &\fe {(\mu X e)} \df \mu X \fe e && \fe {(\nu X e)} \df \nu X \fe e
\end{align*}
%
\end{definition}


We can again establish the adequacy of this interpretation, though this time we need a syntactic result rather than a semantic one:
\begin{theorem}
    [Syntactic adequacy]
    \label{rll-sim-multl}
    $\muLTL \proves \phi \implies \RLLLang \proves \fe \phi = \top$. 
\end{theorem}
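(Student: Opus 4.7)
The plan is to proceed by induction on the $\muLTL$ derivation, verifying that each axiom and rule of $\muLTL$ is simulated by $\RLLLang$-provable reasoning on the translated equation. Two preliminary facts are needed. First, a \emph{substitution lemma}, $\fe{\phi[\psi/X]} = \fe\phi[\fe\psi/X]$, which is routine by induction on $\phi$. Second, a \emph{negation lemma}: for every closed $\muLTL$ formula $\phi$, $\RLLLang \proves \fe\phi + \fe{\bar\phi} = \top$ and $\RLLLang \proves \fe\phi \cap \fe{\bar\phi} = 0$. A direct consequence is that $\RLLLang \proves \fe{\phi\limp\psi}=\top$ is equivalent to $\RLLLang \proves \fe\phi\leq \fe\psi$, using the Boolean reasoning $\fe\phi = \fe\phi\cap(\fe{\bar\phi}+\fe\psi) = \fe\phi\cap\fe\psi$, which will streamline the handling of implications.

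For the negation lemma, I induct on $\phi$, strengthening the statement to allow free variables under a hypothesis $X_i+Y_i\geq \top$ and $X_i\cap Y_i\leq 0$ for each pair, exactly paralleling \cref{lem:compl-functors}. The propositional constants/atoms reduce to direct computations using \eqref{eq:letters-freely-cogenerate-structure}; the Boolean connectives $\lor,\land$ are handled by De Morgan reasoning in bounded distributive lattices \eqref{eq:dist-lattice}; the fixed-point cases $\mu,\nu$ appeal directly to \eqref{eq:mu-nu-are-dual}, as in \cref{lem:compl-functors}. The $\nxt$ case is the crux: to see that $\sum_a a\fe\phi$ and $\sum_a a\fe{\bar\phi}$ are complementary, one computes $\sum_a a\fe\phi+\sum_a a\fe{\bar\phi} = \sum_a a(\fe\phi+\fe{\bar\phi}) = \sum_a a\top = \top$ using \eqref{eq:letter-lbdd-lattice-homo} and \eqref{eq:letters-freely-cogenerate-structure}, and dually $\sum_a a\fe\phi \cap \sum_b b\fe{\bar\phi} = \sum_{a,b}(a\fe\phi\cap b\fe{\bar\phi})$, in which cross-letter terms $a\fe\phi\cap b\fe{\bar\phi}$ (for $a\neq b$) vanish by \eqref{eq:letters-freely-cogenerate-structure}, leaving $\sum_a a(\fe\phi\cap\fe{\bar\phi}) = \sum_a a\cdot 0 = 0$.

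With these in hand, each axiom of $\muLTL$ is simulated directly. A propositional tautology translates to an identity built from $+,\cap,0,\top$ over pairs $\fe{\phi_i},\fe{\bar{\phi_i}}$ which, by the negation lemma, are provably complementary; hence the translation is derivable from the distributive lattice axioms \eqref{eq:dist-lattice} together with these complementary pairs, a general fact about Boolean algebras. The $\nxt$ distribution axioms $\nxt(\phi\star\psi)\liff \nxt\phi\star\nxt\psi$ for $\star\in\{\lor,\land\}$ translate to straightforward applications of \eqref{eq:letter-lbdd-lattice-homo}, again using \eqref{eq:letters-freely-cogenerate-structure} in the $\land$ case to eliminate cross-letter terms. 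Finally, the $\mu$- and $\nu$-unfolding axioms translate, via the substitution lemma, to Prefix and Postfix \eqref{eq:mu-is-least-prefix},\eqref{eq:nu-is-greatest-postfix}.

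The inference rules are then immediate. Modus ponens: from $\fe\phi=\top$ and $\fe\phi\leq\fe\psi$, conclude $\fe\psi=\top$. The $\nxt$ rule: from $\fe\phi=\top$ derive $\sum_a a\fe\phi = \sum_a a\top = \top$ via \eqref{eq:letter-lbdd-lattice-homo} and \eqref{eq:letters-freely-cogenerate-structure}. The $\mu$ rule: from $\fe{\phi(\psi)}\leq \fe\psi$, substitution gives $\fe\phi(\fe\psi)\leq \fe\psi$, whence Induction \eqref{eq:mu-is-least-prefix} yields $\mu X\fe\phi(X)\leq \fe\psi$. The $\nu$ rule is symmetric via Coinduction. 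The main technical obstacle is thus the negation lemma, especially the $\nxt$ case, where the homomorphism axioms and \eqref{eq:letters-freely-cogenerate-structure} must conspire; everything else is a direct unpacking of the $\muLTL$ axioms and rules against their $\RLLLang$ counterparts.
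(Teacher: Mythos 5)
Your proposal is correct and follows essentially the same route as the paper: induction on the $\muLTL$ derivation, with the propositional axioms discharged by provable complementarity of $\fe\phi$ and $\fe{\bar\phi}$, the $\nxt$-normality axioms by the homomorphism and partition axioms \eqref{eq:letter-lbdd-lattice-homo} and \eqref{eq:letters-freely-cogenerate-structure}, and the fixed-point axioms and rules by the fact that $\fe{-}$ commutes with $\mu,\nu$. Your explicit negation lemma (proved by induction with the $\mu,\nu$ cases via \eqref{eq:mu-nu-are-dual}, mirroring \cref{lem:compl-functors}) is a slightly more careful rendering of what the paper obtains by appealing to the provable Boolean-algebra structure of \cref{sec:complement} together with the duality of $\fe P$ and $\fe{\bar P}$; in particular it handles cleanly the point that $\fe{\bar\phi}$ and $\c{(\fe\phi)}$ are not syntactically identical (e.g.\ for $\nxt$), which the paper's sketch leaves implicit.
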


\begin{proof}
    By induction on $\muLTL$ proofs.
    
    \begin{itemize}
        \item All the propositional axioms are handled by the fact that RLL expressions $\RLLLang$-provably form a Boolean Algebra (\cf~\Cref{sec:complement}), and since $\bullet$ is defined directly as a homomorphism $(\bot,\top,\lor,\land) \to (0,\top, +,\cap)$. 
        We also need duality of $\fe P$ and $\fe{\bar P}$ in $\RLLLang$:
        \[
        \begin{array}{r@{\ = \ }l}
             \fe P + \fe {\bar P} & \sum\limits_{a \ni P}a\top + \sum\limits_{a\not\ni P}a\top \\
                & \sum\limits_{a\in \Alphabet}a\top \\
                & \top
        \end{array}
        \qquad
        \begin{array}{r@{\ = \ }l}
             \fe P \cap \fe {\bar P} & \sum\limits_{a\ni P} a\top \cap \sum\limits_{b\ni P}b\top \\
                & \sum\limits_{a\ni P}\sum\limits_{b\not\ni P} a\top \cap b\top\\
                & 0
        \end{array}
        \]
        \item For normality of $\nxt $ wrt $\lor$, it suffices by Boolean reasoning in $\RLLLang$ to derive:
        \[
        \begin{array}{r@{\ = \ }ll}
          \fe {(\nxt(\phi \lor \psi))} & \sum\limits_{a \in \Alphabet} a (\fe \phi + \fe \psi) & \text{by definition of $\fe -$} \\
          & \sum\limits_{a \in \Alphabet} \left( a \fe \phi + a\fe \psi \right)& \text{$\because$ $a$ is a $+$-homomorphism} \\
          & \sum\limits_{a \in \Alphabet} a \fe \phi + \sum\limits_{a \in \Alphabet} a \fe \psi & \text{by commutativity and associativity of $+$} \\
          & \fe {(\nxt \phi \lor \nxt \psi)} & \text{by definition of $\fe -$}
        \end{array}
        \]

        \item For normality of $\nxt$ wrt $\land$, it again suffices by Boolean reasoning in $\RLL$ to derive $\fe{(\nxt (\phi \land \psi))} = \fe{(\nxt \phi \land \nxt \psi)}$:
        \[
        \begin{array}{r@{\ = \ }ll}
           \fe{(\nxt (\phi \land \psi))} & \sum\limits_{a\in \Alphabet} a (\fe \phi \cap \fe \psi) & \text{by definition of $\fe -$} \\
           & \sum\limits_{a\in \Alphabet} \left(a \fe \phi \cap a\fe \psi\right) & \text{$\because$ $a$ is a $\cap$-homomorphism} \\
           & \sum\limits_{a \in \Alphabet} \sum\limits_{b \in \Alphabet} \left( a\fe \phi \cap b \fe \psi\right) & \text{$\because$ $ae \cap bf = 0$ whenever $a\neq b$} \\
           & \sum\limits_{a \in \Alphabet}a\fe \phi \, \cap \, \sum\limits_{b\in \Alphabet}b\fe \psi & \text{by distributivity} \\
           & \fe{(\nxt \phi \land \nxt \psi)}
        \end{array}
        \]
        \item The simulation of axioms for $\mu$ and $\nu$ are immediate, by functoriality, as $\fe -$ commutes with $\mu$ and $\nu$.
        \item Obtaining the rules is mostly straightforward. Modus ponens reduces to transitivity of $\leq$, under Boolean reasoning. Necessitation is simulated by $\top = \sum\limits_{a\in \Alphabet} a\top$. Simulating (co)induction rules are immediate as $\fe -$ commutes with $\mu$ and $\nu$. \qedhere
    \end{itemize}
\end{proof}

\subsection{Compatibility of interpretations and completeness}
To complete our reduction of $\RLLLang$ completeness to $\muLTL$ completeness, as well as simulating $\muLTL$ reasoning, we need compatibility of the two translations.

\begin{proposition}
    [Compatibility]
    \label{ef-fe-compatible}
    $\RLLLang \proves \fe{{\ef e}}=e$
\end{proposition}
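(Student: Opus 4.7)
The plan is to proceed by induction on the structure of $e$, so the statement is really that $\RLLLang \proves \fe{\ef{e(\vec X)}} = e(\vec X)$ as an equation possibly with free variables, letting us push the induction through binders.

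For the base case $e = X$ the two translations are both the identity, so the equation is trivial. For $e = f + g$ and $e = f \cap g$, both $\ef{-}$ and $\fe{-}$ act homomorphically on $\lor,\land$ versus $+,\cap$, so the claim follows immediately from the two inductive hypotheses. For the fixed point cases $e = \mu X f(X)$ and $e = \nu X f(X)$, both translations also commute with $\mu$ and $\nu$, so the goal reduces to $\RLLLang \proves \mu X \fe{\ef{f(X)}} = \mu X f(X)$ (and dually for $\nu$). Here we need a congruence principle: from $\RLLLang \proves g(X) = h(X)$ one can conclude $\RLLLang \proves \mu X g(X) = \mu X h(X)$, which follows by two applications of Induction, using that $h(\mu X g(X)) = g(\mu X g(X)) \leq \mu X g(X)$ and symmetrically (this is essentially the argument for~\Cref{cor:functoriality}).

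The substantive case is $e = af$. Unfolding the definitions,
\[
\fe{\ef{(af)}} \;=\; \bigcap_{P\in a} \Bigl(\sum_{b \ni P} b\top\Bigr) \;\cap\; \bigcap_{P\notin a} \Bigl(\sum_{b \not\ni P} b\top\Bigr) \;\cap\; \sum_{c\in\Alphabet} c\,\fe{\ef f}.
\]
By the inductive hypothesis the last factor $\RLLLang$-equals $\sum_{c\in\Alphabet} c f$, so it suffices to show that the first two intersections collapse to $a\top$. Using distributivity \eqref{eq:dist-lattice} together with the disjointness axiom $ae \cap bf = 0$ for $a \neq b$ from \eqref{eq:letters-freely-cogenerate-structure}, one computes
\[
\bigcap_{P\in a} \sum_{b \ni P} b\top \;=\; \sum_{b \supseteq a} b\top, \qquad
\bigcap_{P\notin a} \sum_{b \not\ni P} b\top \;=\; \sum_{b \subseteq a} b\top,
\]
since in a distributive lattice a meet of sums selects tuples of summands, and disjointness kills all tuples that mention distinct letters. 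Intersecting these two and applying disjointness once more leaves only the $b = a$ summand, yielding $a\top$. Finally, $a\top \cap \sum_{c\in\Alphabet} c f = a\top \cap a f = a(\top \cap f) = a f$, by the homomorphism axioms \eqref{eq:letter-lbdd-lattice-homo} and lattice reasoning.

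The main obstacle is the letter case, i.e.\ verifying formally in $\RLLLang$ that the large meet of sums defining $\ef{(ae)}$'s characteristic conjunction collapses to $a\top$; everything else is either definitional or an immediate appeal to the inductive hypothesis modulo the congruence principle for $\mu$ and $\nu$ noted above.
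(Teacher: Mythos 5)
Your proposal is correct and follows essentially the same route as the paper: induction on $e$, with all cases except the letter case immediate because $\fe{\ef{-}}$ commutes with $X,+,\cap,\mu,\nu$ (the paper glosses the $\mu/\nu$ congruence that you rightly justify via \cref{cor:functoriality}), and the $af$ case handled by collapsing the big meet of sums to $a\top$ via distributivity and the disjointness axiom of \eqref{eq:letters-freely-cogenerate-structure}, then finishing with $a\top\cap\sum_c c\,\fe{\ef f}=a(\top\cap\fe{\ef f})=af$. Your explicit computation of the two meets as $\sum_{b\supseteq a}b\top$ and $\sum_{b\subseteq a}b\top$ is just a more detailed rendering of what the paper dismisses as ``set theoretic reasoning.''
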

\begin{proof}
    By induction on the structure of $e$.
    Almost all cases are immediate, as $\fe{{\ef -}}$ commutes with $X,+,\cap,\mu,\nu$. 
    For the remaining homomorphism case, we reason in $\RLL$:
    \[
        \begin{array}{r@{\ = \ }ll}
             \fe{{\ef {(ae)}}} & \fe {\left( \bigwedge\limits_{P\in a} P \land \bigwedge\limits_{P\notin a} \bar P \land \nxt \ef e \right)} & \text{by definition of $\ef -$} \\
             & \bigcap\limits_{P\in a} \sum\limits_{b\ni P} b\top \cap \bigcap\limits_{P\notin a}\sum\limits_{b\not\ni P} b\top \cap \sum\limits_{c\in \Alphabet} c\fe{{\ef e}} & \text{by definition of $\fe -$} \\
             & a\top \cap \sum\limits_{c\in \Alphabet} c\fe{{\ef e}} & \text{by set theoretic reasoning} \\
             & \sum\limits_{c\in \Alphabet} (a\top \cap c\fe{{\ef e}} ) & \text{by distributivity} \\
             & a\top \cap a\fe{{\ef e}} & \text{since $ae\cap bf = 0$ when $a\neq b$} \\
             & a(\top \cap \fe{{\ef e}}) & \text{as $a$ is a $\cap$-homomorphism} \\
             & a \fe{{\ef e}} & \text{as $\top$ is a $\cap$-unit} \\
             & ae & \text{by induction hypothesis}
        \end{array}
        \]
        To explain a little further the third line above, note that any $b\neq a$ is distinguished from $a$ by either some $P\in a\setminus b$ or some $P\in b\setminus a$.
\end{proof}

We can finally assemble our main completeness result which immediately gives us~\cref{thm:RLL-soudness-completeness}.

\begin{theorem}[Completeness of $\RLLLang$]
$\lang e = \lang f \implies \RLLLang \proves e=f$.
\end{theorem}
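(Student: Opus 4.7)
The plan is to reduce to Kaivola's completeness theorem (\cref{kaivola}) by round-tripping through the $\muLTL$ embedding via the compatibility identity $\RLLLang \proves \fe{\ef e} = e$ of \cref{ef-fe-compatible}. Given $\lang e = \lang f$, the ultimate target is $\RLLLang \proves \fe{\ef e} = \fe{\ef f}$, from which two applications of compatibility yield $\RLLLang \proves e = \fe{\ef e} = \fe{\ef f} = f$.

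The first step would be to upgrade \cref{adequacy} to an equality, namely $\lang e = \lang{\ef e}$ for every closed $e$. The $\subseteq$ direction is exactly semantic adequacy. For the reverse inclusion $\lang{\ef e} \subseteq \lang e$, the cleanest route is to leverage the complements of \cref{sec:complement}: apply semantic adequacy to $\c e$, verify by a routine induction on $e$ that $\ef{\c e}$ is $\muLTL$-equivalent to the De Morgan negation of $\ef e$ (using self-duality of $\nxt$ in the letter case and duality of $\mu/\nu$ in the fixed point cases), and then combine with $\lang{\c e} = \Alphabet^\omega \setminus \lang e$ from \cref{sec:complement} and take complements of both sides.

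Armed with $\lang{\ef{-}} = \lang{-}$, the hypothesis $\lang e = \lang f$ gives $\lang{\ef e} = \lang{\ef f}$, which is exactly the validity of the $\muLTL$ biconditional $\ef e \liff \ef f$. Kaivola's theorem then yields $\muLTL \proves \ef e \liff \ef f$. Applying syntactic adequacy (\cref{rll-sim-multl}) to each implication, and unpacking $\fe{-}$ on the Boolean connectives (so that $\muLTL$-negations correspond to $\RLLLang$-complements, which are well-behaved by the Boolean reasoning of \cref{sec:complement}), would give $\RLLLang \proves \fe{\ef e} = \fe{\ef f}$, at which point compatibility closes the argument.

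The principal obstacle is the reverse direction of semantic adequacy. The complement-based shortcut above is compact but requires verifying the auxiliary equivalence $\muLTL \proves \ef{\c e} \liff \lnot \ef e$; the letter case is the fiddliest, since $\c{(ae)} = a\c e + \sum_{b \neq a} b\top$ must be reconciled with the $\muLTL$ negation of $\bigwedge_{P \in a} P \land \bigwedge_{P \not\in a} \bar P \land \nxt \ef e$. Alternatively, one can prove $\lang{\ef e} \subseteq \lang e$ by a direct dual induction mirroring the proof of \cref{adequacy}, swapping the roles of least-prefixed-point and greatest-postfixed-point reasoning in the $\mu$ and $\nu$ cases; either route is routine, but the bookkeeping is where the real work of this theorem lives.
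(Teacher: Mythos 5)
Your proposal is correct in its overall strategy (round-trip through $\muLTL$ and close with \cref{ef-fe-compatible}), but it decomposes the problem differently from the paper, and the difference is exactly where you locate "the real work." The paper's first move is a Boolean reduction: using the provable complementation of \cref{prop:compl-is-compl}, it suffices to show $\lang e = \Alphabet^\omega \implies \RLLLang \proves e = \top$ (apply this to $(\c e + f)\cap(e+\c f)$ and recover $e=f$ by distributive-lattice reasoning). Under that reduction the one-directional \cref{adequacy} is already enough: $\lang e = \Alphabet^\omega$ and $\lang e \subseteq \lang{\ef e}$ immediately give $\models \ef e$, then Kaivola, then \cref{rll-sim-multl} gives $\RLLLang \proves \fe{\ef e} = \top$, and compatibility finishes. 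Your route, by contrast, works with the biconditional $\ef e \liff \ef f$ and therefore genuinely needs two strengthenings that the paper never proves: (i) the converse inclusion $\lang{\ef e} \subseteq \lang e$ (your complement-based argument for this is sound — the letter case does check out over $\Alphabet = \Pow(\props)$ — but it is an extra induction), and (ii) the fact that $\fe{\bar\phi}$ and $\fe\phi$ are $\RLLLang$-provably complementary, which you need to extract $\fe{\ef e} \le \fe{\ef f}$ from $\RLLLang \proves \fe{(\ef e \limp \ef f)} = \top$; you gesture at this as "unpacking $\fe{-}$ on the Boolean connectives" but it is a lemma requiring its own induction on $\phi$. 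Both routes are viable; the paper's initial reduction to the $=\top$ case is the cheaper path, buying you the luxury of a one-sided adequacy lemma and a single application of syntactic adequacy to an outright theorem rather than to the two halves of a biconditional.
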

\begin{proof}
    By Boolean reasoning it suffices to show that $\lang e = \Alphabet^\omega \implies \RLL \proves  e = \top$:
    \[
    \begin{array}{r@{\ \implies\ }ll}
         \lang e = \Alphabet^\omega & \sigma \models \ef e & \text{by \cref{adequacy}} \\
            & \muLTL \proves \ef e & \text{by \cref{kaivola}} \\ 
            & \RLL \proves \fe {{\ef e }} = \top & \text{by \cref{rll-sim-multl}} \\
            & \RLL \proves e = \top & \text{by \cref{ef-fe-compatible}}
    \end{array}
    \]
\end{proof}
\section{Concluding remarks and future work}
\label{sec:conclusion}

In this work, we introduced RLL expressions, a notation for APAs and gave a sound and complete axiomatisation for their equational theory. We make some observations about our choice of axioms and compare with existing literature.

\subsection{Alternative axiomatisation(s)}

Our axiomatisation $\RLLLang$ for $\Lang$ is first-order, avoiding second-order axioms such as completeness of lattices. Still, stating the duality of $\mu$ and $\nu$, \cref{eq:mu-nu-are-dual}, requires quantifiers.

Let us point out that the completeness argument for $\RLLLang$ only used the principles \eqref{eq:c-is-complement}, an equational consequence of \eqref{eq:mu-nu-are-dual} under \cref{eq:dist-lattice,eq:letter-lbdd-lattice-homo,eq:letters-freely-cogenerate-structure,eq:mu-is-least-prefix,eq:nu-is-greatest-postfix}. In fact, \cref{eq:dist-lattice,eq:letter-lbdd-lattice-homo,eq:letters-freely-cogenerate-structure,eq:mu-is-least-prefix,eq:nu-is-greatest-postfix,eq:c-is-complement} axiomatises the same first-order theory as $\RLLLang$. 

\begin{proposition}
\label{prop:qfree-axiom}
\cref{eq:dist-lattice,eq:letter-lbdd-lattice-homo,eq:letters-freely-cogenerate-structure,eq:mu-is-least-prefix,eq:nu-is-greatest-postfix,eq:c-is-complement} proves \cref{eq:mu-nu-are-dual}.   
\end{proposition}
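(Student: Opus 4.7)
The plan is to derive each half of \eqref{eq:mu-nu-are-dual} by instantiating its hypothesis at a complementary pair supplied by \eqref{eq:c-is-complement}, using that a fixed-point expression equals its unfolding, and then invoking (co)induction. The two halves are completely symmetric, so I describe only the first in detail.

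For the first half, assume $\forall X, Y\,(\top \leq X+Y \implies \top \leq e(X) + f(Y))$. Write $m \df \mu X e(X)$, which is closed and hence has a complement $\c m$ satisfying \eqref{eq:c-is-complement}. Instantiate the hypothesis at $X \df m$, $Y \df \c m$: the antecedent $\top \leq m + \c m$ holds by \eqref{eq:c-is-complement}, so we obtain $\top \leq e(m) + f(\c m)$. Since $m$ is a fixed (not merely prefixed) point of $e$ by \eqref{eq:mu-is-least-prefix} together with \cref{ex:postfix-mu}, we have $e(m) = m$, giving $\top \leq m + f(\c m)$. A short distributive calculation (see below) then yields $\c m \leq f(\c m)$, so $\c m$ is a postfixed point of $f$; coinduction \eqref{eq:nu-is-greatest-postfix} produces $\c m \leq \nu Y f(Y)$, and combining with \eqref{eq:c-is-complement} gives $\top \leq m + \c m \leq \mu X e(X) + \nu Y f(Y)$.

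The second half is dual. Assume $\forall X,Y\,(X\cap Y \leq 0 \implies e(X)\cap f(Y)\leq 0)$, let $n \df \nu Y f(Y)$, and instantiate at $X\df \c n$, $Y \df n$; the antecedent holds by \eqref{eq:c-is-complement}. Since $f(n) = n$, we get $e(\c n)\cap n \leq 0$, and the dual distributive calculation converts this to $e(\c n) \leq \c n$. Induction \eqref{eq:mu-is-least-prefix} then gives $\mu X e(X) \leq \c n$, and meeting with $n$ and using \eqref{eq:c-is-complement} yields $\mu X e(X) \cap \nu Y f(Y) \leq 0$.

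The only ingredient beyond the axioms and \cref{ex:postfix-mu} is the Boolean fact that in a bounded distributive lattice equipped with complements in the sense of \eqref{eq:c-is-complement}, $\top \leq A + B$ forces $\c A \leq B$, and dually $A\cap B \leq 0$ forces $A \leq \c B$. Both are immediate by distributivity: $\c A = \c A \cap \top \leq \c A \cap (A + B) = (\c A \cap A) + (\c A \cap B) \leq 0 + B = B$, and symmetrically. No serious obstacle arises; the content of the proposition is really just that the quantified principle \eqref{eq:mu-nu-are-dual} requires no further inductive ideas once \eqref{eq:c-is-complement} is available alongside the (co)induction schemas already present.
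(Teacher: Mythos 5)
Your proof is correct and follows essentially the same route as the paper's: instantiate the hypothesis at the complementary pair $\mu X e(X)$, $\c{(\mu Xe(X))}$ supplied by \eqref{eq:c-is-complement}, pass from $\top \leq e(\mu Xe(X)) + f(\c{(\mu Xe(X))})$ to $\c{(\mu Xe(X))} \leq f(\c{(\mu Xe(X))})$ via the distributive-lattice fact (the paper isolates this as a separate claim, $\c e \leq f \iff \top \leq e+f$), and conclude by Coinduction. The only cosmetic difference is that you invoke the full fixed-point equation $e(m)=m$ where the Prefix inequality alone suffices.
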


We will first prove the following claim.

\begin{proposition}
\label{claim:boolean-reasoning}
$e^c\le f \iff \top\le e+f$    
\end{proposition}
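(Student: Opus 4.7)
The plan is to treat this as a standard fact about Boolean algebras, using only the complementation laws \eqref{eq:c-is-complement} together with bounded distributive lattice reasoning \eqref{eq:dist-lattice} and functoriality (\Cref{cor:functoriality}). Both directions are short; the whole proof is essentially the derivation ``$a \le b \iff a \cap b^c \le 0 \iff \top \le a^c + b$'' specialised to our setting.

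For the forward direction, assume $e^c \le f$. By functoriality of $e + (\cdot)$ (a consequence of \Cref{cor:functoriality}), we get $e + e^c \le e + f$. Combined with $\top \le e + e^c$ from \eqref{eq:c-is-complement} and transitivity of $\le$, this yields $\top \le e + f$.

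For the backward direction, assume $\top \le e + f$. Then I would compute
\[
e^c \;=\; e^c \cap \top \;\le\; e^c \cap (e+f) \;=\; (e^c \cap e) + (e^c \cap f),
\]
using the hypothesis together with functoriality in the inequality step, and distributivity \eqref{eq:dist-lattice} for the last equality. Now $e \cap e^c \le 0$ by \eqref{eq:c-is-complement}, so by functoriality $(e^c \cap e) + (e^c \cap f) \le 0 + (e^c \cap f)$. Finally, $0$ is a unit for $+$ (\Cref{ex:0-is-unit}) and $e^c \cap f \le f$ by the definition of $\le$ via $\cap$, so we conclude $e^c \le f$.

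There is no real obstacle here: both directions are routine Boolean manipulations, and every ingredient (complement laws, distributivity, functoriality, and $0$ being a unit) is already available in $\RLLLang$ via \cref{eq:dist-lattice,eq:letter-lbdd-lattice-homo,eq:letters-freely-cogenerate-structure,eq:mu-is-least-prefix,eq:nu-is-greatest-postfix,eq:c-is-complement}. The only point worth being careful about is that \eqref{eq:c-is-complement} is stated only for \emph{closed} expressions, so we should verify that in the intended application (the proof of \Cref{prop:qfree-axiom}) we are instantiating $e$ at a closed expression, or alternatively read the claim as a general quasi-equational consequence that holds whenever $e$ has a provable complement in the sense of \eqref{eq:c-is-complement}.
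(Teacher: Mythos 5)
Your proof is correct and follows essentially the same route as the paper's: the forward direction via $\top \le e + e^c \le e+f$, and the backward direction by meeting both sides of $\top \le e+f$ with $e^c$ and distributing. The remark about \eqref{eq:c-is-complement} being stated for closed expressions is a reasonable point of care, but the argument itself matches the paper's.
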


\begin{proof}
Suppose $e^c\le f$. Then, $\top\le e+e^c\le e+f$. Now suppose $\top\le e+f$. Then, $\top\cap e^c\le e^c\cap(e+f)$. Therefore, $e^c\le (e^c\cap e)+e^c\cap f$ or $e^c\le e^c\cap f$. Thus, $e^c\le f$.
\end{proof}

\begin{proof}[Proof of~\Cref{prop:qfree-axiom}]
We will prove that $\forall X,Y ( \top \leq X + Y \implies \top \leq e(X) + f(Y)) \implies \top \leq \mu X e(X) + \nu Y f(Y)$. The other case with $\cap$ will be symmetric. Suppose for all $X,Y$, $\top\le X+Y\implies \top\le e(X)+f(Y)$. Therefore, since $\top\le \mu Xe(X)+(\mu Xe(X))^c$, we have $\top\le e(\mu Xe(X))+f((\mu Xe(X))^c)$. By Prefix, $\top\le \mu X e(X) + f((\mu Xe(X))^c)$. By~\Cref{claim:boolean-reasoning}, this is equivalent to $(\mu X e(X))^c\le f((\mu X e(X))^c)$. By Coinduction, $(\mu X e(X))^c\le \nu Yf(Y)$. Again, by~\Cref{claim:boolean-reasoning}, $\top\le\mu Xe(X)+\nu Yf(Y)$. 
\end{proof}

Of course, \eqref{eq:c-is-complement} is rather an axiom \emph{schema}, and so the result above still does not give a \emph{finite} quantifier-free axiomatisation of $\Lang$. However, this may not be the same as the one axiomatised by the equational theory with negation as a bona fide operator (rather than syntactic sugar).

For what it is worth, let us also point out that we can present \eqref{eq:mu-nu-are-dual} as quantifier-free \emph{rules} rather than an axiom:
\[
\vlinf{}{\text{$X,Y$ fresh}}{\top \leq \mu X e(X) + \nu Y f(Y)}{\top \leq X+Y \Rightarrow \top \leq e(X) + f(Y)}
\quad
\vlinf{}{\text{$X,Y$ fresh}}{\mu X e(X) \cap \nu Y f(Y) \leq 0}{X\cap Y \leq 0 \Rightarrow e(X) \cap f(Y) \leq 0}
\]
Following from the presentation of \eqref{eq:mu-nu-are-dual} as sequent rules above, we may consider an alternative but equational rule for duality of $\mu$ and $\nu$, now given in sequent style:
\begin{equation}
    \label{eq:eq-duality-of-mu-nu}
    \vlinf{}{}{\Gamma \Rightarrow \Delta, \mu X e(X) + \nu Y f(Y)}{\Gamma , X+Y \Rightarrow \Delta, e(X) + f(Y)}
\end{equation}
Again it is not hard to see that these rules are sound for any completely distributive lattice, not just $\Lang$, by induction on closure ordinals. One can also show that these rules suffice to establish \eqref{eq:c-is-complement} under \cref{eq:dist-lattice,eq:letter-lbdd-lattice-homo,eq:letters-freely-cogenerate-structure,eq:mu-is-least-prefix,eq:nu-is-greatest-postfix}, and so is also complete for the equational theory of $\Lang$.

It is not clear to us whether it is even possible to \emph{finitely} quantifier-free axiomatise the RLL theory of $\Lang$. For comparison, it is known that regular expressions do not have a finite equational axiomatisation~\cite{Redko64}. One way to bias one of the above mentioned formulations of the RLL theory of $\Lang$ is to conduct a proof theoretic analysis, investigating which (if any) of the formulations we have presented behave well under cut-elimination.

\subsection{Comparison with \texorpdfstring{$\omega$}{}-algebras}
\label{subsec:omega-algebra-comparison}

Recall that $\omega$-regular expressions are an extension of regular languages with terms of the form $e^\omega$ that are adequate to capture all $\omega$-regular languages. The intended interpretation is $\Lang(e^\omega)=\{u_0u_1u_2\dots \mid u_i\in\Lang(e), \forall i\in\omega\}$. Surprisingly, the algebraic theory of $\omega$-regular expression has not been explored until recently. Wagner~\cite{Wagner76} gave a two-sorted axiomatisation that was proved complete in~\cite{CLS15}. Cohen~\cite{Cohen00} proposed an axiomatic theory with $\omega$-regular expressions but not with the intension of proving completeness for $\Lang$. In fact, it is indeed incomplete for the language model because it cannot prove identities like $e^\omega f=e^\omega$. In~\cite{CLS15} Cohen's axiomatic theory was extended to be complete for $\Lang$. 
In the finite world, every `left-handed' Kleene Algebra is an RLA~\cite{DD24a} but not vice versa. The picture is not that clear in the current setting.

\subsection{Axiomatising relational models}
\label{subsec:rel-models}

$\KA$s admit relational models interpreting product as composition, sum as union, and the Kleene star as reflexive, transitive closure. It is well-known that the relational model and $\Lang$ admit the same regular equations. Similarly, interpreting each $a\cdot$ as pre-composition by some fixed binary relation $a^\mathcal R$ and $\mu$ as the least fixed point, $\RLA$s admit relational models that has the same equations as $\Lang$. 

However, in Kleene lattices, relational and language models start to differ: $ef\cap 1 = (e\cap 1)(f\cap 1)$ is valid in $\Lang$ but not in the relational interpretations~\cite{HSI11}. Analogously, relational structures do not model $\RLLLang$ (in general). The interpretations $a^\mathcal R$ are not necessarily lattice homomorphisms: we have $a(e\cap f)\leq ae \cap af$ but not the converse.
Thus relational structures, in general, refute \cref{eq:letter-lbdd-lattice-homo}. At the same time they do not necessarily satisfy \eqref{eq:letters-freely-cogenerate-structure} either: for instance $a^\mathcal R$ and $b^\mathcal R$ may intersect, even when $a\neq b$.
In this case $\mathcal R \models a\top \cap b\top \neq 0$ and so the class of relational structures refutes \eqref{eq:letters-freely-cogenerate-structure}.
On the other hand, even $a^\mathcal R \top = \top$ as soon as $a^\mathcal R \neq \emptyset$. It is therefore a natural question if there is a natural restriction of $\RLLLang$ that is complete for the relational interpretation.


\bibliographystyle{alpha}
\bibliography{biblio}

\appendix
\section{Evaluation game and consequences}
\label{sec:eval-game}

\subsection{More on Fischer-Ladner}

Write $\flredeq$ for the reflexive closure of $\flred$, i.e.\ $e \flred f $ if $e=f $ or $e\flred f$.
A \defname{trace} is a sequence $e_0 \flredeq e_1 \flredeq \cdots$.
We also write $e\lefl f$ if $e \leqfl f \not \leqfl e$.

We mentioned some properties of the Fischer-Ladner closure in the previous section. Let us collect these and more into a formal result:

\begin{proposition}
[Properties of $\FL$, see, e.g., \cite{StrEme89:aut-th-proc-mu-calc,KupMarVen22:fl-props}]
\label{prop:fl-props}
    We have:
    \begin{enumerate}
        \item\label{item:fl-finite} $\fl e$ is finite, and in fact has size linear in that of $e$.
        \item\label{item:fl-preorder} $\leqfl$ is a preorder and $\lefl$ is well-founded.
        \item\label{item:traces-have-least-inf-occ-elem} Every trace has a minimum infinitely occurring element, under $\subform$. 
        If a trace is not eventually stable, the minimum element has form $\mu X e $ or $\nu X e$.
    \end{enumerate}
\end{proposition}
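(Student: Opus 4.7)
The plan is to handle the three items in order, with (1) and (2) following by routine structural induction and finiteness, while (3) is the main content.

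For (1), I would induct on the structure of $e$. The only non-trivial cases are the binders, where the identity $\fl{\sigma X e} = \{\sigma X e\} \cup \{f[\sigma X e / X] : f \in \fl e\}$ cited in the paper gives $|\fl{\sigma X e}| \le |\fl e| + 1$; the linear bound follows by noting that each element of $\fl e$ corresponds to a unique subformula position of the starting expression, up to substitution of free variables by surrounding fixed-point subexpressions. For (2), $\leqfl$ is by definition the reflexive-transitive closure of $\flred$, hence a preorder. Well-foundedness of $\lefl$ is then immediate: any sequence $(f_i)_i$ with $f_{i+1} \lefl f_i$ lies in $\fl{f_0}$, finite by (1), and consists of pairwise $\leqfl$-inequivalent elements, so must terminate.

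For (3), given an infinite trace $(e_i)_{i<\omega}$, let $S \subseteq \fl{e_0}$ be the set of infinitely occurring elements; it is nonempty and finite. For any $f,g \in S$, both occur infinitely often along the trace, so there are indices $i < j$ and $k < l$ with $e_i = f$, $e_j = g$, $e_k = g$, $e_l = f$. The connecting subtraces give $g \leqfl f$ and $f \leqfl g$, so $S$ is contained in a single $\leqfl$-equivalence class $C$.

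The crux, which I expect to be the main obstacle, is the lemma: every such class $C$ admits a $\subform$-minimum, and this minimum has the form $\sigma X e$ whenever $|C| \ge 2$. The idea is that any $\flred$-cycle must traverse at least one unfolding step $\sigma X e \flred e[\sigma X e / X]$, since the letter and distributive reductions strictly decrease syntactic size; the target $e[\sigma X e / X]$ has $\sigma X e$ as a proper subformula and itself lies in $C$, so $\sigma X e$ sits $\subform$-strictly below it in $C$. Tracing around any cycle and picking the outermost such unfolded fixed point yields a $\subform$-lower bound for the entire cycle. Formalising uniqueness of this minimum, and thereby ruling out incomparable $\subform$-minima in $C$, will require a careful induction --- plausibly on the total number of distinct fixed-point subexpressions of $e_0$ --- and I would follow the standard treatment of the cited references. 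Granted the lemma, the $\subform$-minimum of $C$ is also the $\subform$-minimum of $S$, and the ``not eventually stable'' hypothesis forces $C$ to be non-singleton, so this minimum is a fixed-point expression.
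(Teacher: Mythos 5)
Your treatment of items (1) and (2) matches the paper's, which likewise proves (1) by structural induction on $e$ via the identity $\fl{\sigma X e} = \{\sigma X e\}\cup\{f[\sigma X e/X] : f\in\fl e\}$ and dismisses (2) as immediate from finiteness and the definition of $\leqfl$ as a reflexive--transitive closure. For item (3) the two arguments package the same standard result differently. The paper observes that $\flredeq\ \subseteq\ \subform\cup\supform$ and asserts that the claim ``reduces to a more general property on well partial orders: any path along $\subform\cup\supform$ must have a $\subform$-minimum''; you instead argue concretely that the infinitely occurring expressions form a single $\leqfl$-class, that every $\flred$-cycle must traverse a fixed-point unfolding (the only step that fails to strictly decrease syntactic size, since $\sigma X e\flred e(\sigma X e)$ with $X$ not occurring in $e$ also decreases size), and that the unfolded $\sigma X e$ is then a strict $\subform$-lower bound for its unfolding. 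Your concrete route is, if anything, on firmer ground: the paper's abstract order-theoretic claim is false for arbitrary posets (a path alternating $a,c,b,c,a,\dots$ with $a$ and $b$ incomparable subformulas of $c$ has every consecutive pair comparable yet no minimum among its infinitely occurring elements), so the FL-specific structure you exploit is genuinely needed. Both you and the paper defer the one remaining point --- uniqueness of the $\subform$-minimum, i.e.\ ruling out incomparable minimal elements of the class --- to the same cited references, so this is not a gap beyond what the paper's own ``proof idea'' leaves open.
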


\begin{proof}
    [Proof idea]
    \ref{item:fl-finite} follows by straightforward structural induction on $e$, noting that $\fl{\sigma X e}= \{\sigma X e\} \cup \{f[\sigma X e /X] : f \in\fl{e} \}$.
    \ref{item:fl-preorder} is immediate from the definitions.  
    For \ref{item:traces-have-least-inf-occ-elem} note that $\flredeq \ \subseteq\  \subform \cup \supform$, whence the property reduces to a more general property on well partial orders: any path along $\subform \cup \supform$ must have a $\subform$-minimum.
\end{proof}

We call the smallest infinitely occurring element of a trace its \defname{critical} formula.
If a trace is not ultimately stable, we call it a \defname{$\mu$-trace} or \defname{$\nu$-trace} if its critical formula is a $\mu$-formula or a $\nu$-formula, respectively.

\subsection{The evaluation game}
In this subsection we define games for evaluating expressions, similar in spirit to \emph{acceptance games} for APAs. 

\begin{definition}
    [Evaluation Game]
    The \defname{Evaluation Game} is a two-player game, played by Eloise ($\Eloise$) and Abelard ($\Abelard$).
    The positions of the game are pairs $(w,e)$ where $w\in \Alphabet^\omega$ and $e$ is an expression. 
    The moves of the game are given in \cref{fig:eval-rules}.\footnote{For positions where a player is not assigned, the choice does not matter as there is a unique available move.}

    An infinite play of the evaluation game is \defname{won} by $\Eloise$ (aka \defname{lost} by $\Abelard$) if the smallest expression occurring infinitely often (in the right component) is a $\nu$-formula.
    (Otherwise it is won by $\Abelard$, aka lost by $\Eloise$.)

    If a play reaches deadlock, i.e.\ there is no available move, then the player who owns the current position loses.
\end{definition}

Note that property \eqref{item:traces-have-least-inf-occ-elem} from \cref{prop:fl-props} justifies our formulation of the winning condition in the evaluation game: the right components of any play always form a trace that is never stable, by inspection of the available moves. 
Thus it is either a $\mu$-trace or a $\nu$-trace.

Note that winning can be formulated as a parity condition, assigning priorities consistent with the subformula ordering and with $\mu$ and $\nu$ formulas having odd and even priorities, respectively, just like for the APAs $\mathbf A_e$ we defined earlier.
It is well-known that parity games are positionally determined,  i.e.\ if a player has a winning strategy from some position, then they have one that depends only on the current position, not the previous history of the play (see, e.g., \cite{GTW03,PerPin:inf-word-aut-book}). Thus:
\begin{observation}
\label{obs:eval-game-is-pos-det}
    The Evaluation Game is positionally determined.
\end{observation}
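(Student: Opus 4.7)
The plan is to reduce to the classical positional determinacy theorem for parity games with finitely many priorities (Emerson--Jutla / Mostowski), which applies even on infinite arenas. The text preceding the observation already sketches exactly this route; the proposal is to make this precise.

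First, fix a starting position $(w_0, e_0)$ and consider the reachable subgame from it. I would observe that the right-components of reachable positions are all elements of $\fl{e_0}$: inspection of the moves in the evaluation game shows that each step either replaces $f$ by some $g$ with $f\flred g$, or (in the action case $a\cdot f \to f$) consumes a letter of the word component. Combined with \cref{prop:fl-props}\ref{item:fl-finite}, this bounds the right-component to a finite set. The arena is then partitioned between $\Eloise$ and $\Abelard$ exactly as the evaluation rules dictate, with deadlock positions treated as losing for their owner.

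Next I would equip the arena with a priority function $\Omega(v,f) \df c(f)$, where $c:\fl{e_0}\to\Nat$ is any function that is monotone \wrt $\subform$ and assigns odd (resp.\ even) values to $\mu$-formulas (resp.\ $\nu$-formulas)---precisely the colourings used in defining $\mathbf A_e$, whose existence is guaranteed by \cref{prop:fl-props}\ref{item:fl-preorder}. The key coherence check is that this parity condition coincides with the stated winning condition of the evaluation game. For this I would apply \cref{prop:fl-props}\ref{item:traces-have-least-inf-occ-elem}: any infinite play induces a trace whose moves are never stable (since no rule leaves $f$ unchanged), so it has a $\subform$-minimum infinitely occurring element, which must be a $\mu$- or $\nu$-formula. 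By monotonicity of $c$, this critical formula carries the minimum priority occurring infinitely often, whence $\Eloise$ wins the evaluation play iff the minimum infinitely occurring priority is even, i.e.\ iff the parity condition on $\Omega$ holds.

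With the game recast as a parity game over a (possibly infinite) arena with finitely many priorities, I would invoke the standard positional determinacy theorem cited in the paragraph preceding the observation. There is no real obstacle here; the only care needed is the finiteness of $\fl{e_0}$ (to ensure $\Omega$ takes finitely many values) and the correctness of the parity/winning correspondence, both of which are immediate from \cref{prop:fl-props}.
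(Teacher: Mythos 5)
Your proposal is correct and follows essentially the same route as the paper, which justifies the observation in the paragraph immediately preceding it: recast the winning condition as a parity condition via a colouring monotone \wrt $\subform$ (as for $\mathbf A_e$), then invoke positional determinacy of parity games. You merely spell out the details (finiteness of $\fl{e_0}$ and the coherence check via \cref{prop:fl-props}) that the paper leaves implicit.
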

Indeed, by a standard well-ordering argument, there is a \emph{universal} positional winning strategy for $\Eloise$, one that wins from each winning position. Similarly for $\Abelard$.

\begin{figure}
    \centering
    \begin{tabular}{|c|c|c|}
        \hline
        Position  & Player &  Available moves
        \\\hline
       $(aw,ae)$ & - &  $(w,e)$ \\
       $(aw,be)$ with $a\neq b$ & $\Eloise$ &  \\
       $(w,0)$ & $\Eloise$ & \\
       $(w,\top)$ & $\Abelard$ & \\
       $(w,e+f)$ & $\Eloise$ & $(w,e)$, $(w,f)$ \\
       $(w,e\cap f)$ & $\Abelard$ & $(w,e)$, $(w,f)$ \\
       $(w, \mu X e(X))$ & -  & $(w,e(\mu X e(X))$ \\
       $(w, \nu X e(X))$ & - & $(w,e(\nu X e(X))$ 
       \\
        \hline
    \end{tabular}    
    \caption{Rules of the evaluation game.}
    \label{fig:eval-rules}
\end{figure}

As suggested by its name, the Evaluation Game is adequate for $\Lang$, the main result of this subsection:

\begin{lemma}
[Evaluation]
\label{lem:eval}
    $w \in \lang e $ $\iff$ Eloise  has a winning strategy from $(w,e)$. (Otherwise, by determinacy, Abelard has a winning strategy from $(w,e)$).
\end{lemma}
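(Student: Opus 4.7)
By \cref{obs:eval-game-is-pos-det}, it suffices to show both (a) $w \in \lang e$ implies Eloise has a winning strategy from $(w,e)$, and (b) $w \notin \lang e$ implies Abelard does. The two directions are dual, so I focus on (a); (b) follows by the same argument with the roles of $\mu$ and $\nu$ swapped, or alternatively by passing through the complement $\c e$ defined in \cref{sec:complement} together with the fact that $(w,e)$-positions for Abelard are essentially $(w,\c e)$-positions for Eloise.

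The plan for (a) is the classical \emph{signature} argument. For each closed $\mu$-subformula $\mu X f(X)$ appearing along a play, define ordinal approximants $(\mu X f(X))^\alpha$ in the usual way (iterating the operator $A \mapsto \lang{f(A)}$ from $\emptyset$), so that $\lang{\mu Xf(X)} = \bigcup_\alpha \lang{(\mu Xf(X))^\alpha}$. To any position $(v,g)$ reachable from $(w,e)$ with $v \in \lang g$, assign a \emph{signature}: the tuple of ordinals indexed by the (finitely many, by \cref{prop:fl-props}) active $\mu$-subformulas of $g$, each being the least $\alpha$ such that $v$ lies in the $\alpha$-th approximant of that $\mu$-formula. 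Eloise's positional strategy will be: at a position $(v,g_1+g_2)$ pick the disjunct whose language contains $v$; at $(v,g_1 \cap g_2)$ do nothing (Abelard moves, and either choice preserves $v \in \lang\cdot$); at the letter position $(av, ag)$ step to $(v,g)$ (now $v \in \lang g$ by definition of $\lang{ag}$); at fixed-point positions, simply unfold. One checks that this choice preserves the invariant $v \in \lang g$ along the entire play.

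The heart of the argument is to show that no infinite play $\pi$ against this strategy can be won by Abelard. Suppose otherwise; by \cref{prop:fl-props}(\ref{item:traces-have-least-inf-occ-elem}), the critical formula $c$ of $\pi$ is some $\mu X f(X)$, and from some point $n_0$ onwards, every position is a subformula of $c$ (so in particular $c$ stays in the active $\mu$-signature). Compare signatures at the infinitely many positions where $c$ is unfolded: by Knaster--Tarski, $v \in \lang{\mu Xf(X)}$ with least approximant $\alpha > 0$ implies $v \in \lang{f((\mu X f(X))^\beta)}$ for some $\beta < \alpha$, so the signature component for $c$ strictly decreases at each unfolding of $c$. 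Meanwhile, letter and propositional moves, and unfoldings of strict subformulas of $c$, do not increase the $c$-component (the approximant index only depends on the current tail of $w$ and the operator, and strict subformula unfoldings reset signatures only below $c$ in the $\subform$-order). Ordering the signatures lexicographically by $\subform$-depth, we obtain an infinite descending sequence of ordinals, a contradiction.

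The main obstacle, as ever with these arguments, is the bookkeeping of signatures across nested fixed points and the interaction of the $\mu$-signature with the shifting tail of $w$ caused by letter moves; one has to make precise that the signature of the critical formula cannot be \emph{reset} to a larger ordinal by ambient moves, which requires restricting attention to the tail $\pi_{\geq n_0}$ where only $\subform$-descendants of $c$ appear and noting that each such descendant's signature lives strictly below $c$'s in the lexicographic order. With this in hand, (a) is complete; (b) is entirely symmetric, replacing ``least prefixed point'' by ``greatest postfixed point'' and tracking a $\nu$-signature (an ordinal measuring how late $v$ \emph{falls out} of the decreasing approximant chain), finishing the proof modulo positional determinacy.
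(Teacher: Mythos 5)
Your overall route is the same as the paper's: reduce to the two implications via positional determinacy, then run a signature/approximant argument for the $\implies$ direction and dualise for $\impliedby$. But there is a genuine gap in the definition of Eloise's strategy at disjunctions. You instruct her to ``pick the disjunct whose language contains $v$'', and your later contradiction rests on the claim that no move increases the signature component of the critical formula $c$. That claim fails for exactly this kind of choice: when \emph{both} summands contain $v$, picking an arbitrary membership-preserving one can strictly \emph{increase} the least signature. The paper's own cautionary example is $\mu X(\top + X)$: every word lies in both summands of the unfolding $\top + \mu X(\top+X)$, and the membership-preserving strategy that always returns to the right summand loops forever on a $\mu$-formula and loses, even though $\lang{\mu X(\top+X)} = \Alphabet^\omega$. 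The fix --- and the actual content of the paper's proof --- is that at a position $(v, g_0+g_1)$ Eloise must choose the summand admitting the \emph{least} $M$-signature $\vec\alpha$ with $v \in \lang{g_i^{\vec\alpha}}$; only then is the sequence of least signatures along the play monotone non-increasing, strictly decreasing at unfoldings of $c$, which yields the desired contradiction with well-foundedness. Your closing paragraph locates the delicate bookkeeping in letter moves and nested fixed points, but the reset you worry about actually happens at the $+$-choices, which your strategy leaves underdetermined.

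Two smaller points. First, ``from some point $n_0$ onwards, every position is a subformula of $c$'' is backwards: by \cref{prop:fl-props}, $c$ is the $\subform$-\emph{minimum} of the infinitely occurring expressions, so it is $c$ that is a subformula of them. Second, deriving (b) ``by passing through $\c e$'' is not immediate: $\c{(ae)} = a\c e + \sum_{b\neq a} b\top$ introduces extra disjuncts, so the game on $\c e$ with roles swapped is not literally the game on $e$; the paper instead runs the dual argument directly, with Abelard preserving non-membership and choosing, at $\cap$-positions, the conjunct with least $N$-signature witnessing non-membership.
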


The proof of this result uses relatively standard but involved techniques, requiring a detour through a theory of approximants and signatures when working with fixed point logics, inspired by previous work on the modal $\mu$-calculus such as \cite{StrEme89:aut-th-proc-mu-calc,NiwWal96:games-mu-calc}. 
Roughly, for the $\implies$ direction, we construct a winning $\Eloise$-strategy by preserving language membership whenever making a choice at a $+$-state $(w,e+f)$.
However this is not yet enough: if \emph{both} $w \in \lang e$ and $w \in \lang f$, we must make sure to `decrease the witness' of membership. 
E.g.\ the $\Eloise$ strategy that loops on $(w, \mu X(\top+X)) $ does not win despite $w \in \lang {\mu X(\top+X)} = \lang \top = \Alphabet^\omega$: at some point we must choose the move $(w, \top + \mu X(\top+X)) \rightarrow (w, \top)$ to win.
Formally such a `witness' is given by an \emph{approximant} of a fixed point. For instance if $w \in \lang{\mu X e(X)}$ then we consider the least ordinal $\alpha$ such that $w \in \lang{e^\alpha(0)}$, appropriately defined. 
We can assign such approximations to \emph{every} least fixed point of an expression, \emph{signatures}, lexicographically ordered according to a `dependency order' induced by $\leqfl$, and always make choices at $+$-states according to least signatures.
The $\impliedby$ direction is completely dual, constructing a winning $\Abelard$-strategy, under determinacy, by approximating greatest fixed points instead of least.

We shall give a proof of \cref{lem:eval} in the next subsection, but the reader familiar with such results may safely skip it.
Before that, let us point out one useful consequence of the Evaluation Lemma: it yields immediately the $\omega$-regularity of languages denoted by RLL expressions:

\begin{proof}
    [Proof sketch of \cref{thm:lang-a-e}]
    The evaluation game for an expression $e$ is just the acceptance game (see, e.g., \cite{Bojan23course}) for the APA $\mathbf A_e$.
    More directly, an $\Eloise$ strategy from $(w,e)$ is just a run-tree from $(w,e)$ in $\mathbf A_e$, and the former is winning if and only if the latter is accepting.
    From here we conclude by \cref{lem:eval}.
\end{proof}

\subsection{Proof of the Evaluation Lemma}
A key point for proving \cref{lem:eval} is the fact that least and greatest fixed points admit a dual characterisation as limits of approximants.
%
    The Knaster-Tarski theorem tells us that, for any complete lattice $(L,\leq)$ and monotone operation $f:L\to L$, there is a least fixed point $\mu f = \bigwedge \{A\geq f(A)\}$ and a greatest fixed point $\nu f = \bigvee \{A \leq f(A)\}$. (More generally, the set $F$ of fixed points of $L$ itself forms a complete sublattice.)
    However $\mu f$ and $\nu f $ can alternatively defined in a more iterative fashion.

    First, for $A\in L$ and $\alpha $ an ordinal, define the \defname{approximants} $f^\alpha(A)$ and $f_\alpha(A)$ by transfinite induction on $\alpha$ as follows,
    \[
    \begin{array}{r@{\ \df \ }l}
         f^0(A) & A  \\
         f^{\alpha + 1}(A) & f(f^\alpha(A)) \\
         f^\lambda (A) & \bigvee\limits_{\alpha <\lambda}f^\alpha(A)
    \end{array}
\qquad
    \begin{array}{r@{\ \df \ }l}
         f_0(A) & A  \\
         f_{\alpha + 1}(A) & f(f_\alpha(A)) \\
         f_\lambda (A) & \bigwedge\limits_{\alpha <\lambda}f_\alpha(A)
    \end{array}
    \]
    where $\lambda$ ranges over limit ordinals. It turns out that we have
    \[
    \begin{array}{r@{\ = \ }l}
         \mu f & \bigvee\limits_{\alpha} f^\alpha(\bot_L) \\
        \nu f & \bigwedge \limits_{\alpha} f_\alpha (\top_L)
    \end{array}
    \]
    where $\bot_L$ and $\top_L$ are the least and greatest elements, respectively, of $(L,\leq)$, and $\alpha $ ranges over all ordinals. (In fact it suffices to bound the range by the cardinality of $L$, by the transfinite pigeonhole principle).

    This viewpoint often provides a more intuitive way to compute fixed points, in particular for calculating $\lang e$.


Now let us turn to proving \cref{lem:eval}. Recall the subformula ordering $\subform$ and the FL ordering $\leqfl$ we introduced earlier.
Let us introduce a standard ordering of fixed point formulas (see, e.g., \cite{StrEme89:aut-th-proc-mu-calc,KupMarVen22:fl-props}):

\begin{definition}
    [Dependency order] \label{def:dependency-order}
    The \defname{dependency order} on closed expressions, written $\dleq$, is defined as the lexicographical product $\leqfl \times \supform$.
    I.e.\ $e \preceq f$ if either $e\lefl f$ or $e\eqfl f $ and $f \subform e$.
\end{definition}\anupam{It would be better if the dependency order was consistent with the subformula ordering, due to the way progressing traces / winning plays are defined (in terms of subformula), so that the induced parity condition matches. This amounts to inverting the current formulation of dependency order: smaller expressions, according to the order, should be more important, not larger ones.}

Note that, by properties \ref{item:fl-finite} and \ref{item:fl-preorder} of \cref{prop:fl-props}, we have that $\dleq$ is a well partial order on expressions. 
In the sequel we assume an arbitrary extension of $\dleq$ to a total well-order $\leq$.

\begin{definition}
    [Signatures]
    Let $M$ be a finite set of $\mu$-formulas $\{\mu X_0 e_0  > \cdots > \mu X_{n-1} e_{n-1}\}$.
    An $M$-\defname{signature} (or $M$-\defname{assignment}) is a sequence $\vec \alpha$ of ordinals indexed by $M$.
    Signatures are ordered by the lexicographical product order.
    An $M$-\defname{signed} formula is an expression $e^{\vec \alpha}$, where $e$ is an expression and $\vec \alpha$ is an $M$-signature.
%
For $N$ is a finite set of $\nu$-formulas we define $N$-signatures similarly and use the notation $e_{\vec \alpha}$ for $N$-signed formulas.
\end{definition}

We evaluate signed formulas in $\Lang$ just like usual formulas, adding the clauses,
\begin{itemize}
\item $\wlang {(\mu X_i e_i(X))^{\vec \alpha_i 0 \vec \alpha^i}} \df \emptyset$.
    \item $\wlang {(\mu X_i e_i(X))^{\vec \alpha_i (\alpha_i + 1 ) \vec \alpha^i}} \df \wlang { (e_i (\mu X_i e_i(X)))^{\vec \alpha_i \alpha_i  \vec \alpha^i}}$.
    \item $\wlang{(\mu X_i e_i(X))^{\vec \alpha_i \alpha_i \vec \alpha^i}} \df \bigcup \limits_{\beta_i <\alpha_i} \wlang{(\mu X_i e_i(X))^{\vec \alpha_i \beta_i \vec \alpha^i}} $, when $\alpha_i$ is a limit.

    \medskip
    
\item $\wlang {(\nu X_i e_i(X))_{\vec \alpha_i 0 \vec \alpha^i}} \df \Alphabet^{\leq \omega}$.
    \item $\wlang {(\nu X_i e_i(X))_{\vec \alpha_i (\alpha_i + 1 ) \vec \alpha^i}} \df \wlang { (e_i (\nu X_i e_i(X)))_{\vec \alpha_i \alpha_i  \vec \alpha^i}}$.
    \item $\wlang{(\nu X_i e_i(X))_{\vec \alpha_i \alpha_i \vec \alpha^i}} \df \bigcap \limits_{\beta_i <\alpha_i} \wlang{(\nu X_i e_i(X))_{\vec \alpha_i \beta_i \vec \alpha^i}} $, when $\alpha_i$ is a limit.
\end{itemize}
where we are writing $\vec \alpha_i \df (\alpha_j)_{j<i}$ and $\vec \alpha^i \df (\alpha_j)_{j>i}$.

Since least and greatest fixed points can be computed as limits of approximants, and since expressions compute monotone operations in $\Lang$,
we have that, for any sets $M,N$ of $\mu,\nu$ formulas respectively:
\begin{itemize}
    \item $\lang e = \bigcup \limits_{\vec \alpha} \lang{e^{\vec \alpha}}$
    \item $\lang e = \bigcap \limits_{\vec \beta} \lang{e_{\vec \beta}}$
\end{itemize}
where $\vec \alpha $ and $\vec \beta$ range over all $M$-signatures and $N$-signatures, respectively.
Thus we have: \todo{say more by way of justification?}
\begin{proposition}
Suppose $e$ is an expression and $M,N$ the sets of $\mu,\nu$-formulas, respectively, in $\fl e$. We have:
\begin{itemize}
    \item If $w \in \wlang e$ then there is a least $M$-signature $\vec \alpha$ such that $w \in \wlang {e^{\vec \alpha}}$.
    \item If $w\notin \wlang e$ then there is a least $N$-signature $\vec \alpha$ such that $w \notin \wlang {e_{\vec \alpha}}$.
\end{itemize}
\end{proposition}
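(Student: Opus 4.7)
The plan is to reduce the statement to the two displayed approximant identities immediately preceding it, $\wlang e = \bigcup_{\vec \alpha} \wlang{e^{\vec \alpha}}$ and $\wlang e = \bigcap_{\vec \beta} \wlang{e_{\vec \beta}}$, together with the well-foundedness of the lexicographic order on signatures.

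For the $\mu$-case: if $w \in \wlang e$, the union identity says that the set
\[ S \df \{\vec \alpha \text{ an } M\text{-signature} : w \in \wlang{e^{\vec \alpha}}\} \]
is non-empty. An $M$-signature is a tuple of ordinals indexed by the finite totally ordered set $M$ (using our fixed extension of $\dleq$ to a total well-order), and the lexicographic product of finitely many copies of the class of ordinals is itself a well-order. Hence $S$ contains a least element. The $\nu$-case is entirely dual: if $w \notin \wlang e$, the intersection identity gives that $T \df \{\vec \alpha \text{ an } N\text{-signature} : w \notin \wlang{e_{\vec \alpha}}\}$ is non-empty, and the same well-foundedness argument produces a least element of $T$.

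I do not anticipate a serious obstacle. The only content beyond routine bookkeeping lies in the two approximant identities themselves, which rest on the Knaster--Tarski-style characterisation of fixed points as limits of their ordinal-indexed approximants and on the monotonicity of RLL operators on the powerset lattice $\pow{\Alphabet^\omega}$; the passage from these identities to the existence of a least signature is then standard well-foundedness of the lex product. Should full formality be desired for the identities, they are proved by induction on the structure of $e$, pushing unions (respectively intersections) through $+$, $\cap$ and $a\cdot$ via the homomorphism laws in \eqref{eq:letter-lbdd-lattice-homo}, and handling the $\mu$- and $\nu$-binders by the transfinite approximant definitions recalled just before the statement.
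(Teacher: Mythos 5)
Your proposal is correct and matches the paper's own (very terse) justification: the paper likewise derives the proposition immediately from the two displayed approximant identities, with the existence of a \emph{least} signature resting on the fact that the lexicographic product of ordinals over the finite index set $M$ (resp.\ $N$) is a well-order. Your write-up simply makes explicit the non-emptiness and well-foundedness steps that the paper leaves implicit.
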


In fact, for RLL expressions interpreted in $\Lang$, it suffices to take only signatures of finite ordinals, i.e.\ natural numbers, for the result above, but we shall not use this fact.
We are now ready to prove our characterisation of evaluation:

\begin{proof}
[Proof sketch of \cref{lem:eval}]
Let $M,N$ be the sets of $\mu,\nu$-formulas, respectively, in $\fl e$. 

    $\implies$. 
    Suppose $w \in \wlang e $. 
    We construct a winning $\Eloise$ strategy $\strat e $ from $(w,e)$ by always preserving membership of the word in the language of the expression.
    Moreover, at each position $(w',e_0 + e_1)$, $\strat e $ chooses a summand $e_i$ admitting the least $M$-signature $\vec \alpha$ for which $w'\in \wlang {e_i^{\vec \alpha}}$.
    As $\strat e$ preserves word membership, no play reaches a state $(aw,be)$, with $a\neq b$, or $(w,0)$, and so any maximal finite play of $\strat e$ is won by $\Eloise$.
     So let $(w_i,e_i)_{i<\omega}$ be an infinite play of $\strat e$ and, for contradiction, assume that its smallest infinitely occurring formula is $\mu X f(X)$. 
    Write $\vec \alpha_i$ for the least $M$-signature s.t.\ $w_i\in \wlang {e_i^{\vec \alpha_i}}$, for all $i<\omega$.
    By construction $(\vec \alpha_i)_{i<\omega}$ is a monotone non-increasing sequence.
    Moreover, since $(e_i)_{i<\omega}$ is infinitely often $\mu Xf(X)$,
    the sequence $(\vec \alpha_i)_{i<\omega}$ does not converge. Contradiction.\todo{ more justification could be given here.}

    $\impliedby$. The argument is entirely dual, constructing an $\Abelard$-strategy $\strat a $ that preserves non-membership, following least $N$-signatures at positions $(w',e_0 \cap e_1)$. 
\end{proof}

\end{document}